\numberwithin{equation}{subsection}
\numberwithin{equation}{section}
\theoremstyle{plain}
\newtheorem{theorem}{Theorem}[section]
\newtheorem{lemma}[theorem]{Lemma}
\newtheorem{conjecture}{Conjecture}
\theoremstyle{definition}
\theoremstyle{remark}
\newtheorem{remark}[theorem]{\bf{Remark}}
\newcommand{\cU}{\ensuremath{{\cal U}}}
\newcommand{\m}{\ensuremath{{\cal M}}}
\newcommand{\ca}{\ensuremath{{\cal A}}}
\newcommand{\cR}{\ensuremath{{\cal R}}}
\newcommand{\cH}{\ensuremath{{\cal H}}}
\newcommand{\al}{\alpha}
\newcommand{\de}{\delta}
\newcommand{\Tau}{{\cal T}}
\newcommand{\vph}{\varphi}
\newcommand{\ep}{\varepsilon}
\newcommand{\R}{\ensuremath{{\mathbb R}}}
\newcommand{\N}{\ensuremath{{\mathbb N}}}
\newcommand{\B}{\ensuremath{{\mathbb B}}}
\newcommand{\D}{\ensuremath{{\mathbb D}}}
\newcommand{\Z}{\ensuremath{{\mathbb Z}}}
\newcommand{\C}{\ensuremath{{\mathbb C}}}
\newcommand{\HH}{\ensuremath{{\mathbb H}}}
\newcommand{\ovB}{\ensuremath{{ \overline{ \mathbb B }}}}
\newcommand{\bA}{{\bf A }}
\newcommand{\bB}{{\bf B}}
\DeclareMathOperator{\VolBB}{Vol\mathbb{B}}
\DeclareMathOperator{\sech}{sech}
\newcommand{\beq}{\begin{equation}}
\newcommand{\eeq}{\end{equation}}
\newcommand{\beqa}{\begin{equation}\begin{aligned}}
\newcommand{\eeqa}{\end{aligned}\end{equation}}
\newcommand{\brmk}{\begin{rmk}}
\newcommand{\ermk}{\end{rmk}}
\newcommand{\partref}[1]{\hbox{(\csname @roman\endcsname{\ref{#1}})}}
\newcommand{\Rm}{{\mathrm{Rm}}}
\newcommand{\Ric}{{\mathrm{Ric}}}
\newcommand{\K}{{\mathrm{K}}}
\newenvironment{claim}[1]{\par\noindent\underline{Claim:}\space#1}{}
\newenvironment{claimproof}[1]{\par\noindent\underline{Proof of Claim:}\space#1}{\leavevmode\unskip\penalty9999 \hbox{}\nobreak\hfill\quad\hbox{$\dagger \dagger$}}
\title{Improved Pseudolocality on Large Hyperbolic Balls}
\author{Andrew D. McLeod}
\date{ \today}
\begin{document}
\usetagform{red}
\maketitle
\begin{abstract}
We obtain an improved pseudolocality result for Ricci flows on two-dimensional surfaces that are initially almost-hyperbolic
on large hyperbolic balls. We prove that, at the central point of the hyperbolic ball, the Gauss curvature remains
close to the hyperbolic value for a time that grows exponentially in the radius of the ball. 
This two-dimensional result allows us to precisely conjecture how the phenomenon should appear in the higher dimensional setting.
\end{abstract}

{\small \tableofcontents}

\section{Introduction}
A Ricci flow solution $g(t)$ on a smooth $n$-dimensional manifold $\m,$ defined for all $t \in [0,T],$ is a one-parameter family of smooth Riemannian metrics $g(t),$ for $t \in [0,T],$ on $\m$ whose evolution is governed by
the equation
\beq
	\label{rf-eqn}
		\frac{ \partial g}{\partial t}(t) = -2\Ric_{g(t)}
\eeq
with $g(0) := g_0$ for some given initial metric $g_0$ on $\m.$ The Ricci flow equation in \eqref{rf-eqn} can be viewed as a non-linear heat equation.

The powerful pseudolocality theorem of Perelman, Theorem 10.1 in \cite{Perelman}, exhibits a property of complete Ricci flow solutions of bounded curvature
which is false for solutions of the linear heat equation. Roughly speaking this theorem asserts that if a local region looks \emph{almost Euclidean} then it cannot suddenly look highly non-trivial.
There are numerous conditions that can be used to provide a precise meaning of \emph{almost Euclidean}; see, for example, the conditions utilised in any of
Theorems 10.1 and 10.3 in \cite{Perelman} and Proposition 3.1 in \cite{Tian} (though it is worth remarking that recent work of Fabio Cavalletti and Andrea Mondino 
establishes that the conditions assumed in Proposition 3.1 in \cite{Tian} imply that the hypotheses of Theorem 10.1 in \cite{Perelman} are satisfied on a strictly smaller 
initial region, see \cite{Mondino}). 

More recently, Miles Simon and Peter Topping obtain a pseudolocality-type result in dimension three valid outside the \emph{almost Euclidean} setting.
In particular, a consequence of Theorem 1.1 in \cite{Simon1} is that even when the hypotheses of Proposition 3.1 in \cite{Tian} are not close to their Euclidean counterparts,
one may still conclude $C/t$ curvature decay for some $C > 0.$

A particularly interesting consequence of pseudolocality is that, under complete flows with bounded curvature, initial curvature bounds propagate forward
for some definite period of time. This phenomenon is precisely captured by Theorem 10.3 in \cite{Perelman},
whilst the following result of Chen in \cite{Chen2} provides a similar example of the same phenomenon under weaker assumptions in dimension $2$. 

\begin{theorem}[Variant of Proposition 3.9 in \cite{Chen2}]
\label{Chenpseudo}
Let $g(t)$ be a smooth Ricci flow on a smooth surface $\m^2$ defined for all $t \in [0,T].$ 
Let $x_0 \in \m$ and assume, for some $r_0 > 0$, that for all $t \in [0,T]$ we have $\B_{g(t)} ( x_0 , r_0 ) \subset \subset \m.$
For a given $v_0 > 0$ suppose that $\left| \K_{g(0)} \right| \leq r_0^{-2} $ throughout $\B_{g(0)} (x_0 , r_0),$ and $\VolBB_{g(0)}  ( x_0 , r_0 ) \geq v_0 r_0^2.$
Then there exists a constant $A = A(v_0) > 0$ such that
$$ \forall (x,t) \in \B_{g(t)} ( x_0 , r_0/2 ) \times \left[ 0 , \min \left\{ T , Ar_0^2 \right\} \right] \qquad \text{we have} \qquad \left| \K_{g(t)} (x) \right| \leq 2r_0^{-2}. $$
\end{theorem}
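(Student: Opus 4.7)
I would prove this by contradiction via a Cheeger--Gromov compactness argument that uses the non-collapsedness to secure injectivity radius control at the basepoint, followed by short-time existence and uniqueness of the Ricci flow from the limiting initial geometry. After parabolic rescaling I reduce to the normalised case $r_0 = 1$, so the task is to find $A = A(v_0) > 0$ with $|\K_{g(t)}| \leq 2$ on $\B_{g(t)}(x_0, 1/2)$ throughout $[0, \min\{T, A\}]$.

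Suppose no such $A$ exists. Then I extract a sequence of smooth Ricci flows $g_n(t)$ on surfaces $\m_n^2$, basepoints $x_{0,n}$, and first violation times $t_n \in (0, 1/n)$, such that $|\K_{g_n(t)}| \leq 2$ on $\B_{g_n(t)}(x_{0,n}, 1/2)$ for all $t \in [0, t_n]$, while there exists $y_n \in \overline{\B_{g_n(t_n)}(x_{0,n}, 1/2)}$ with $|\K_{g_n(t_n)}(y_n)| = 2$. The initial bounds $|\K_{g_n(0)}| \leq 1$ on $\B_{g_n(0)}(x_{0,n}, 1)$ and $\VolBB_{g_n(0)}(x_{0,n}, 1) \geq v_0$ combine via Cheeger's lemma to furnish a uniform lower bound $\iota_0 = \iota_0(v_0) > 0$ on $\inj_{g_n(0)}(x_{0,n})$. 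Passing to a subsequence, Cheeger--Gromov compactness delivers a smooth pointed Riemannian limit $(\m_\infty, g_\infty, x_\infty)$ of the initial data satisfying $|\K_{g_\infty}| \leq 1$ on $\B_{g_\infty}(x_\infty, 1)$ and $\VolBB_{g_\infty}(x_\infty, 1) \geq v_0$.

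Next I would produce a smooth local Ricci flow $h(t)$ emanating from $g_\infty$ on a precompact neighbourhood of $\overline{\B_{g_\infty}(x_\infty, 1/2)}$, available by Shi's short-time existence from bounded-curvature smooth data. Since $|\K_{h(0)}| \leq 1 < 2$ on the closed half-ball, the continuity of the Ricci flow in $t$ furnishes $\tau_0 > 0$ with $|\K_{h(t)}| < 2$ strictly on $\B_{h(t)}(x_\infty, 1/2)$ for $t \in [0, \tau_0]$. Separately, the a priori bound $|\K_{g_n(t)}| \leq 2$ on the parabolic region $\B_{g_n(t)}(x_{0,n}, 1/2) \times [0, t_n]$ permits Shi's derivative estimates, yielding uniform $C^\infty$ control on slightly smaller cylinders.

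The main obstacle is transferring the strict inequality $|\K| < 2$ from $h(t)$ back to the sequence $g_n(t)$, because $t_n \to 0$ prevents a naive extraction of a time-interval limit flow from $\{g_n\}$ alone. I would circumvent this by combining Hamilton's compactness theorem for Ricci flows with uniqueness of the Ricci flow from the smooth initial data $g_\infty$ to conclude that any subsequential smooth limit of $g_n(t)$ coincides with $h(t)$ on a common time slab. Passing to a further subsequence with $y_n \to y_\infty \in \overline{\B_{h(0)}(x_\infty, 1/2)}$, the smooth convergence together with $t_n \to 0$ forces $|\K_{g_n(t_n)}(y_n)| \to |\K_{h(0)}(y_\infty)| \leq 1$, contradicting $|\K_{g_n(t_n)}(y_n)| = 2$ and producing the desired constant $A(v_0)$.
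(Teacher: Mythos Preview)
The paper does not prove this theorem; it is quoted from Chen and used as a black box. Assessing your argument on its own merits, there is a genuine gap at precisely the point you flag as ``the main obstacle.''

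Your proposed workaround does not close it. Hamilton's compactness for Ricci flows requires uniform curvature bounds on a \emph{fixed} parabolic cylinder $B \times [0,\tau]$ with $\tau > 0$ independent of $n$; since $t_n \to 0$, no such cylinder exists, so no limiting flow is produced and there is nothing to which uniqueness could be applied. Even setting that aside, the uniqueness you would need is for \emph{local, incomplete} Ricci flows with only local curvature control, which is essentially the content of Chen's strong-uniqueness programme and hence circular here. There is also a regularity problem upstream: with only $|\K_{g_n(0)}| \leq 1$ and an injectivity-radius bound, Cheeger--Gromov yields at best $C^{1,\alpha}$ convergence of the initial metrics, so $g_\infty$ need not be smooth; and Shi's interior derivative estimates on $[0,t_n]$ degenerate like $t^{-k/2}$, giving no uniform $C^2$ control of $g_n(t_n)$ and hence no route to $|\K_{g_n(t_n)}(y_n)| \to |\K_{g_\infty}(y_\infty)|$. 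Chen's actual proof avoids compactness entirely: it is a direct localized maximum-principle argument, cutting off (functions of) the curvature in space and deriving a differential inequality that propagates the bound $|\K| \leq 2 r_0^{-2}$ for a definite time depending only on $v_0$.
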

\vskip 4pt
\noindent
An instructive simple setting for pseudolocality is when the initial metric is locally Euclidean on some ball.
In particular, suppose we have a complete, smooth Ricci flow $g(t)$ on a smooth surface $\m^2,$ defined for all $t \in [0,T]$ for some $T > 0,$
with $\B_{g(0)} ( x_0 , R)$ isometric to a Euclidean disc of radius $R.$
Then Theorem \ref{Chenpseudo} gives a universal $A > 0$ such that for $0 \leq t \leq \min \{ T , A R^2  \}$ we have $ \left| \K_{g(t)}(x_0) \right| \leq 2R^{-2}.$ 
Therefore the Gauss curvature $\K_{g(t)}$ at the point $x_0$ remains close to $0$ (the Euclidean Gauss curvature) for 
a time proportional to the square of the radius $R.$

In the hyperbolic setting,
namely, when we have that $ \B_{g(0)} ( x_0 , R) $ is isometric to 
a hyperbolic disc of radius $R,$ Theorem \ref{Chenpseudo} can again be applied.
However, the requirement that $| \K_{g(0)} | \leq r_0^{-2}$ throughout $\B_{g(0)} (x_0 , r_0)$ limits us to considering only radii $r_0 \in (0, 1].$
Therefore the Gauss curvature at $x_0$ may only be controlled for some fixed order one time, irrespective of how large $R$ is.

Our first main result establishes that, provided a sufficiently large initial ball is isometric to a hyperbolic disc of the same radius, the flow remains almost hyperbolic (in a scaled sense) at the central point for a time that is exponential in the radius.

\begin{theorem}[{\bf{Improved control time with equality on large initial ball}}]
\label{hypat0}
For any $ \al \in (0,1]$ there exist constants $\cR = \cR (\al ) > 0$ and $c = c(\al ) >0$ for which the following holds: 

Let $R \geq \cR$ and assume that $g(t)$ is a complete smooth Ricci flow on a smooth surface $\m,$ defined for all $t \in [0,T]$ for some $T > 0,$
and such that, for some $x \in \m,$ we have that $\left( \B_{g(0)} ( x , R) , g(0) \right)$ is isometric to a hyperbolic disc of radius $R.$ 
Then at the point $x$ we have
\beq
	\label{conc_time_0_equiv_thm}
		 -1 - \al \leq \K_{\frac{g(t)}{1+2t}}(x) \leq -1 + \al \qquad \text{for all} \qquad 0 \leq t \leq \Tau_{max} := \min \left\{ T , e^{cR} \right\}.
\eeq
\end{theorem}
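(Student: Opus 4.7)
The plan is to work with the rescaled metric $\tilde g(t) := g(t)/(1+2t)$, for which the hyperbolic metric is a stationary solution. Writing $s = \tfrac12 \log(1+2t)$, the rescaled flow satisfies $\partial_s \tilde g = -2(1+\K_{\tilde g})\tilde g$, and the Gauss curvature scales as $\K_{\tilde g} = (1+2t)\K_g$, so the conclusion \eqref{conc_time_0_equiv_thm} becomes the assertion that $|\K_{\tilde g(s)}(x) + 1| \leq \al$ for $s \lesssim cR$. The initial datum satisfies $\K_{\tilde g(0)} \equiv -1$ throughout $\B_{\tilde g(0)}(x, R)$, so the deviation $w(\cdot,s) := \K_{\tilde g(s)} + 1$ vanishes on the big ball at the initial time, and the task is to keep $|w(x,s)|$ small for log-time of order $R$.

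For short times I would invoke Theorem \ref{Chenpseudo} at scale $r_0 = 1$ (the largest value allowed by the initial hyperbolic curvature bound $|\K_{g(0)}| = 1$), using the fact that a hyperbolic unit disc has a universal volume lower bound. This yields some universal time $A_0 > 0$ and a bound $|\K_{g(t)}(x)| \leq 2$ on $\B_{g(t)}(x, 1/2)$ for $t \in [0, A_0]$, together with Shi-type derivative estimates. This handles the initial regime and, applied on every unit-scale sub-ball inside $\B_{g(0)}(x, R)$, supplies a uniform a priori bound $|w| \leq M$ on a large inner sub-ball over the relevant time interval.

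The core of the proof is a barrier argument for $w$. Under the rescaled flow the Gauss curvature evolves by
\begin{equation*}
\partial_s w = \Delta_{\tilde g} w + 2w^2 - 2w,
\end{equation*}
which for small $|w|$ is dominated by the linear dissipative equation $\partial_s w = \Delta_{\tilde g} w - 2w$. I would construct a supersolution of the form $\phi(y,s) = M e^{-a(R_s - d_{\tilde g(s)}(x,y))}$, where $a \in (0,1]$ is small and $R_s = R - \eta s$ is a slowly shrinking radius. The key computation is that on the hyperbolic model one has $\Delta e^{-ad} = a(a+\coth d)e^{-ad}$, so the choice $a \leq 1$ makes $(\partial_s - \Delta + 2)\phi \geq 0$ up to a controllable error driven by the deviation of $\tilde g(s)$ from the hyperbolic metric. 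On the shrinking boundary $d_{\tilde g(s)}(x, \cdot) = R_s$ one has $\phi = M$, dominating the a priori bound from the previous paragraph, and at $x$ one obtains $\phi(x,s) = Me^{-aR_s}$, which is below $\al$ precisely for $s$ up to a constant times $R$.

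Two technicalities must be managed to make this rigorous. First, the barrier is built with the evolving metric $\tilde g(s)$, whereas the clean hyperbolic Laplace identity holds only on $h$; thus one needs a bootstrap in which the smallness of $|w|$ on a slightly larger ball at time $s$ feeds back into $C^0$ and $C^1$ metric estimates allowing $\Delta_{\tilde g} \phi$ to be compared with its hyperbolic counterpart. Second, one must propagate $d_{\tilde g(s)}$-distance control inward consistently with the shrinking radius $R_s$, so that the region on which the supersolution is constructed does not degenerate. The main obstacle is calibrating the three parameters $a$, $\eta$, and the bootstrap window so that they close simultaneously; with $a$ fixed of order $1$ and $\eta$ chosen sufficiently small depending on $\al$, the barrier beats the advancing boundary of the controlled region, and one reads off $c = c(\al)$ from this balance.
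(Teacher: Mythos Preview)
Your approach is quite different from the paper's, and as written it has a real gap at the lateral boundary of the barrier region. You need $|w|\le M$ on the moving sphere $\{d_{\tilde g(s)}(x,\cdot)=R_s\}$ for \emph{all} $s$ up to order $R$, but Theorem~\ref{Chenpseudo} at unit scale only furnishes such a bound for a universal time of order one (in $t$, hence also in $s$), not for $s\sim R$. The sentence ``applied on every unit-scale sub-ball \ldots supplies a uniform a priori bound $|w|\le M$ \ldots over the relevant time interval'' is exactly where the argument breaks: Chen does not see the large radius $R$, so iterating it naively yields only a bounded total time. Without an independent mechanism that propagates a \emph{non-degrading} curvature or metric bound inward at a fixed spatial rate per unit $s$-time, the barrier has nothing to lean on at its lateral boundary for large $s$. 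Your proposed bootstrap (curvature smallness $\Rightarrow$ $C^0$/$C^1$ metric closeness $\Rightarrow$ Laplacian comparison) also feeds on this missing boundary input: integrating $\partial_s\tilde g=-2w\tilde g$ with $|w|$ only of order $M$ near the boundary lets the metric drift by a factor $e^{O(Ms)}$ there, so the comparison of $\Delta_{\tilde g}$ with $\Delta_h$ is lost precisely where you need it. A secondary technical point: your barrier $\phi=Me^{-a(R_s-d)}$ has $\Delta_h\phi=a(a+\coth d)\,\phi$, which diverges as $d\to 0$, so $\phi$ is not a supersolution at the centre; replacing $e^{ad}$ by $\cosh(ad)$ cures this, but not the main problem.

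For contrast, the paper never works with the evolution of $w$. It decouples metric control from curvature control. Metric control comes from the comparison principle for the conformal-factor PDE $\partial_t u=e^{-2u}\Delta u$: one sandwiches $g(t)$ between two explicit hyperbolic Ricci flows on discs of carefully chosen radii, so that the bound $(1-b)\cH\le g(t)/(1+2t)\le(1+b)\cH$ persists for a universal time $\ep$ on a ball of radius $R-J$ (Lemma~\ref{Barriers}); this step uses Chen only once, to set up the boundary data for the comparison, and does \emph{not} rely on curvature smallness. Curvature control is then recovered from the $C^0$ metric sandwich by parabolic regularity and interpolation (Lemmata~\ref{curvature_control} and~\ref{no_time_delay}). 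The exponential time is produced by discrete iteration: rescale so that time $\ep$ becomes the new time $0$, reapply both lemmata on the ball of radius $R-\Lambda$, and repeat $\lfloor R/\Lambda\rfloor$ times, each step contributing a multiplicative factor $(1+2\ep)$ to the original time. The conformal-factor barrier is the mechanism your scheme lacks: it delivers metric control on the shrinking ball for each step with no degradation and no appeal to curvature smallness, so the iteration closes.
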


\begin{remark}
\label{scaling_role}
Consider a smooth surface $\Sigma$ and a complete metric $g_{\HH}$ 
of constant Gauss curvature $-1$ on $\Sigma$.
Then there is a unique complete Ricci flow 
$\mathfrak{h}(t) := (1+2t)g_{\HH}$,
defined on $\Sigma$ for all times $t \in [0,\infty)$, 
with $\mathfrak{h}(0) \equiv g_{\HH}$.
The uniqueness, a consequence of Theorem 1.1 in \cite{Topping2}, allows us to refer to this flow as the hyperbolic Ricci flow on $\Sigma.$
The rescaled flow $\frac{\mathfrak{h}(t)}{1+2t}$
is identically equal to $h$ for all times $t \in [0,\infty)$, 
and so its Gauss curvature is identically $-1$ for all 
$t \in [0,\infty)$.
This is true irrespective of the smooth surface $\Sigma$,
and so given an arbitrary Ricci flow $G(t)$,
we can measure how hyperbolic the flow is by comparing the Gauss curvature of the rescaled flow $\frac{G(t)}{1+2t}$ with $-1$.

It is in this scaled sense that the Gauss curvature bounds in \eqref{conc_time_0_equiv_thm} establish that the flow $g(t)$ 
remains \emph{``$\al$ almost-hyperbolic"}
at $x$ for all $0 \leq t \leq \Tau_{max}$. 
That is, the Gauss curvature of the \emph{rescaled} flow 
$\frac{g(t)}{1+2t}$ remains within $\al$ of $-1$, which is the value
of the Gauss curvature of the \emph{rescaled} hyperbolic
Ricci flow $\frac{\mathfrak{h}(t)}{1+2t}$, 
for all times $0 \leq t \leq \Tau_{max}$.
In terms of the \emph{unscaled} flow $g(t)$, we have
$\left|\K_{g(t)}(x) - \K_{\mathfrak{h}(t)} \right|
= \left|\K_{g(t)}(x) + \frac{1}{1+2t} \right| 
 \leq \frac{\al}{1+2t}$
for all times $0 \leq t \leq \Tau_{max}$.
Hence not only does $\K_{g(t)}(x)$ remain within $\al$ of 
$-\frac{1}{1+2t}$, the value of the hyperbolic Ricci flows Gauss
curvature $\K_{\mathfrak{h}(t)}$, it in fact becomes closer
to this value as $t$ increases.
\end{remark}

\begin{remark}
\label{2d_existence}
Since the hyperbolic volume of a hyperbolic disc is exponential in the radius, by appealing to the well-developed two-dimensional existence theory (see Theorem 1.3 in \cite{Giesen3}), we may deduce that 
$\left( \B_{g(0)} ( x , R) , g(0) \right)$ being isometric to a hyperbolic disc of radius $R$ implies that the time $T$ for which the flow exists may be taken to be
exponential in the radius $R.$ Therefore $\Tau_{max}$ in \eqref{conc_time_0_equiv_thm} can be taken to be exponential in the radius $R.$
\end{remark}

\begin{remark}
\label{not_complete}
The completeness hypothesis can be weakened. 
The precise condition may be found in Theorem \ref{over_arch_thm}.
Roughly, it requires $g(t)$ balls centred at points $z \in \B_{g(0)} (x , R)$ to remain compactly contained within $\m,$ with the radius of the ball depending on the
$g(0)$ distance of $z$ from $\partial \B_{g(0)} (x, R).$ Of course a complete flow will automatically satisfy this condition. 
Finally, we do not require the flow $g(t)$ to be of bounded curvature,
which will later be seen as a direct consequence of Theorem \ref{Chenpseudo} being valid for flows of unbounded curvature.
\end{remark}
\vskip 2pt
\noindent
Since the pseudolocality result of Chen, Theorem \ref{Chenpseudo}, 
is applicable when the Gauss curvature of the initial metric 
$g(0)$ is only close to the Gauss curvature of the
hyperbolic metric it is natural to wonder if our result remains 
valid under weakened almost-hyperbolic initial assumptions. 
The global situation suggests this should be the case.
It is known that for Ricci flows conformally equivalent to complete hyperbolic metrics, if the initial metric is, in some sense, globally hyperbolic-like then the flow remains $C^l$ close to the hyperbolic Ricci flow
over its entire existence time. For example, see Theorem 2.3 in \cite{Giesen3}, and the subsequent discussion illustrating that the flows considered within this result may be 
extended to exist for all times $t \in [0,\infty).$

Naturally, without assuming the desired Gauss curvature closeness at time $t=0,$ there must be some time delay before such an estimate becomes valid.
Therefore we are led to expecting the result of Theorem \ref{hypat0} to be true, after an arbitrary short time delay, under weaker almost-hyperbolic 
assumptions at time $t=0.$ Our second main result verifies this expectation.

\begin{theorem}[{\bf{Improved control time under almost-hyperbolic hypotheses}}]
\label{mainresult}
There is a universal $\ep >0$ such that for any $ \al \in (0,1]$ and any $\de \in (0,\ep)$ there exist
constants $b = b(\al , \de) \in (0,1),$ $c = c(\al , \de) > 0$ and $\cR = \cR (\al , \de) > 0$ for which the following holds: 

Assume $R \geq \cR$ and that $(\m , \cH)$ is a smooth surface with $\B_\cH (x,R) \subset \subset \m$ for some $x \in \m$ and 
$\left( \B_\cH ( x , R) , \cH \right)$ is isometric to a hyperbolic disc of radius $R.$
Suppose $g(t)$ is a complete smooth Ricci flow on $\m,$ defined for all $t \in [0,T]$ for some $T > 0,$ with $g(0)$ conformal to $\cH$
and satisfying that
\beq
	\label{main_thm_assumed_ests}
		(\bA) \quad (1-b)\cH \leq g(0) \leq (1+b)\cH \qquad \text{and} \qquad (\bB) \quad | \K_{g(0)} | \leq 2
\eeq
throughout $\B_\cH (x,R).$
Then at the point $x \in \m$ we have
\beq
	\label{conc_of_main_thm_2}  
		-1 - \al \leq \K_{\frac{g(t)}{1+2t}} (x) \leq -1 + \al 
		\qquad \text{for all} \qquad 
		\de \leq t \leq \Tau_{max} := 
		\min \left\{ T ,  e^{cR}  \right\}.
\eeq
\end{theorem}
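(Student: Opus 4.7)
The plan is to reduce Theorem \ref{mainresult} to Theorem \ref{hypat0} (or more precisely to the unified technical result Theorem \ref{over_arch_thm} referenced in Remark \ref{not_complete}) by using the initial interval $[0, \de/2]$ to smooth the almost-hyperbolic data into genuinely near-hyperbolic data on a slightly shrunken ball. On $\B_\cH(x, R)$ the Ricci flow can be written in conformal form $g(t) = e^{2u(t)}\cH$, with $u$ satisfying the quasilinear parabolic equation $\partial_t u = e^{-2u}(\Delta_\cH u + 1)$, and with the hyperbolic reference solution being the spatially constant $u_H(t) = \tfrac{1}{2}\log(1+2t)$. The guiding picture is that the perturbation $v := u - u_H$ evolves by a quasilinear parabolic equation with strong smoothing, so that $C^0$-small initial data for $v$ becomes $C^l$-small on interior balls after any fixed positive time.

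First I would establish initial curvature control. For any $y \in \B_\cH(x, R-1)$, exact hyperbolicity of $\cH$ on $\B_\cH(x, R)$ provides $|\K_\cH| \equiv 1$ on $\B_\cH(y, 1)$ and a universal lower bound $\VolBB_\cH(y,1) \geq v_0$; together with (A) and (B) this verifies the hypotheses of Theorem \ref{Chenpseudo} for $g(t)$ on a $g(0)$-ball centred at $y$ of some universal radius $r_0 \in (0, 1)$. We obtain a uniform bound $|\K_{g(t)}(y)| \leq C_0$ for $(y, t) \in \B_\cH(x, R-1) \times [0, A_0]$, with $C_0, A_0$ universal. Feeding this curvature bound into Shi-type derivative estimates gives $|\nabla^k \K_{g(t)}| \leq C_k t^{-k/2}$ on $\B_\cH(x, R-2) \times (0, A_0]$ for every $k$. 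Combined with the $C^0$ closeness coming from (A), this shows that $g(\de/2)/(1 + \de)$ is $C^l$-close to $\cH$ on $\B_\cH(x, R-2)$ for any chosen $l$, provided $b = b(\al, \de)$ is taken sufficiently small.

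The problem has then been recast as follows: starting from the shifted initial time $\de/2$ on the ball $\B_\cH(x, R-2)$, the rescaled flow is a small $C^l$-perturbation of a Ricci flow whose initial metric is an exactly hyperbolic disc of radius $R-2$. The loss of $2$ in the radius only rescales the exponential constant $c = c(\al, \de)$, preserving the time horizon $e^{cR}$. I would then invoke the overarching technical Theorem \ref{over_arch_thm}, which I expect to be engineered precisely to accept such perturbed hyperbolic initial conditions, to obtain the desired bound $\K_{g(t)/(1+2t)}(x) \in [-1 - \al, -1 + \al]$ for $t \in [\de/2, \Tau_{max}]$, whence in particular the claimed inequality on $[\de, \Tau_{max}]$.

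The hardest step will be the third: ensuring that the version of Theorem \ref{hypat0} invoked here is sufficiently robust under small $C^l$ perturbations of the hyperbolic initial datum while preserving the exponential time scale $e^{cR}$. Should direct robustness be delicate to extract, a contradiction-compactness argument offers a clean fallback: assuming failure along a sequence with $b_i \downarrow 0$, the smoothing and derivative bounds from step two guarantee a Cheeger--Gromov subsequential limit which is a Ricci flow on a surface whose initial metric contains an exactly hyperbolic disc of arbitrarily large radius, contradicting Theorem \ref{hypat0} at the limiting base point.
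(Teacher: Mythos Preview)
Your route is unnecessarily indirect. The hypotheses \eqref{over_arch_assumed_ests} of Theorem \ref{over_arch_thm} are \emph{identical} to the hypotheses $(\bA)$ and $(\bB)$ of Theorem \ref{mainresult}: only $C^0$ closeness to $\cH$ plus a crude Gauss curvature bound are required, not $C^l$ closeness or anything obtained by smoothing. The paper's proof is accordingly a direct application of Theorem \ref{over_arch_thm} to $g(t)$ from $t=0$, followed by elementary manipulations (using $e^x-1\geq e^{x/2}$ for $x\geq 2$ and similar inequalities) to recast the time bound \eqref{time_conc} in the form $e^{cR}$ for suitably chosen $c=c(\al,\de)$ and $\cR=\cR(\al,\de)$. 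Your preprocessing via Chen pseudolocality, Shi estimates and interpolation is redundant: that machinery, in the guise of Lemmas \ref{Barriers} and \ref{curvature_control}, is precisely what is packaged \emph{inside} the proof of Theorem \ref{over_arch_thm}, so running it externally only to then invoke Theorem \ref{over_arch_thm} duplicates the work.

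Your step two also has a genuine gap as written. Shi-type bounds $|\nabla^k\K_{g(t)}|\leq C_k t^{-k/2}$ give uniform $C^l$ \emph{bounds} on $g(\de/2)$, not $C^l$ \emph{closeness} to $\cH$; to interpolate to closeness you must first propagate the $C^0$ closeness of $(\bA)$ from $t=0$ forward to $t=\de/2$. Crude integration of $|\K_{g(t)}|\leq C_0$ only yields $g(t)\leq(1+b)e^{2C_0 t}\cH$, and after dividing by $1+2t$ the factor $e^{2C_0 t}/(1+2t)\approx 1+2(C_0-1)t$ is \emph{not} close to $1$, since Chen's theorem delivers $C_0$ of order one but certainly not close to $1$. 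Getting the barriers to persist with the correct normalisation $1+2t$ is exactly the delicate content of Lemma \ref{Barriers}, which argues via the comparison principle against carefully tuned hyperbolic barrier flows on rescaled discs; you would need to reprove this. Your compactness fallback would circumvent the issue but at the cost of all effective constants.
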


\begin{remark}
\label{no_interp}
It may initially appear that, by choosing $b$ sufficiently small,
we could use a combination of PDE regularity theory and interpolation
to combine $(\bA)$ and $(\bB)$ in \eqref{main_thm_assumed_ests}
to yield $C^2$-closeness between the metrics $g(0)$ and $\cH$
throughout a smaller ball $\B_{\cH}(x,R - \gamma)$ for some $\gamma \in (0,1)$, say.
Using the good $L^{\infty}$-estimates
provided by $(\bA)$ to obtain $C^2$-estimate for $g(0) - \cH$
would require, in particular, $C^{2,\al}$-estimates for $g(0)$.
To avoid introducing dependence on the particular metric $g(0)$
we would need to establish these $C^{2,\al}$-estimates
from the hypotheses in \eqref{main_thm_assumed_ests}, rather 
than directly appealing to the smoothness of $g(0)$.

In terms of a conformal factor $u$ for $g(0)$ 
(cf. Section \ref{2d_advantage}), 
the Gauss curvature bound $(\bB)$ in \eqref{main_thm_assumed_ests}
provides estimates for $\Delta u$, 
and one might expect that combining these with elliptic
regularity theory, and the $L^{\infty}$-estimates provided
by $(\bA)$, will yield the desired $C^{2,\al}$-estimates.
However, $(\bB)$ only gives the
pointwise inequality that
$|\Delta u | \leq 2e^{2u}$ throughout $\B_{\cH} (x,R)$.

Consequently, H\"{o}lder bounds and derivative bounds for 
$u$ are not inherited by
$\Delta u$ (as they would be if we knew $\Delta u = F(u)$ for 
a suitably well-behaved function $F:\R \to \R$, say), 
and thus we cannot bootstrap standard elliptic regularity theory
or appeal to Schauder theory (see \cite{GT98}, for example) to 
obtain the $C^{2,\al}$-estimates required for interpolation.
Therefore we cannot conclude any uniform $C^2$-closeness of $g(0)$ 
to $\cH$, and instead must expect there to be an arbitrarily short
time delay before the parabolic smoothing affect of the flow can
provide such uniform $C^2$ control.
\end{remark}

\begin{remark}
If $T < \de$ then \eqref{conc_of_main_thm_2} is vacuous. However, the first estimate in \eqref{main_thm_assumed_ests} coupled with the fact 
that the hyperbolic volume
of a hyperbolic disc is exponential in the radius yield that, for sufficiently large $R,$ we have that $\VolBB_{g(0)} (x,R) \geq e^{aR}$ 
for some universal $a > 0.$
Therefore, as in Remark \ref{2d_existence}, the time $\Tau_{max}$ in \eqref{conc_of_main_thm_2} can be taken to be exponential in the radius $R.$
\end{remark}

\begin{remark}
The Gauss curvature bound \eqref{conc_of_main_thm_2} yields that, 
after an arbitrarily small delay,
the flow $g(t)$ becomes \emph{``$\al$ almost-hyperbolic"} 
at $x$, in the scaled sense detailed in Remark \ref{scaling_role},
and remains so until time $\Tau_{max}$. 
\end{remark}

\begin{remark}
The time $t=0$ Gauss curvature bound of $|\K_{g(0)}| \leq 2$ throughout $\B_\cH (0,R)$ could be weakened to being bounded by some $K_0 > 0.$ 
However, the constant $\ep > 0$ would now depend on $K_0,$ and we necessarily have to allow all the constants $b,$ $c$ and $\cR$ to additionally depend on $K_0.$ 
\end{remark}

\begin{remark}
We do not require the flow $g(t)$ to have bounded curvature, and the completeness hypothesis may once again be weakened as alluded to in 
Remark \ref{not_complete}.
\end{remark}
\vskip 2pt
\noindent
Theorem \ref{mainresult} will be obtained via an iterative procedure 
that we now outline. Lemma \ref{Barriers} is both the first step and 
the main novelty.
It establishes that the rescaled flow $\frac{g(t)}{1+2t}$
satisfies the same barrier bounds as assumed in $(\bA)$ of 
\eqref{main_thm_assumed_ests} on a
smaller closed ball $\overline{\B}_{\cH}(x,R-J)$ 
for a definite amount of time $\ep > 0.$
It will be crucial that we are allowed to make $J$ large; 
moving a sufficiently far distance away from the original boundary 
is essential to establishing the persistance of the barriers 
for the rescaled flow.

The second step makes use of the fact that in two-dimensions the Ricci flow equation reduces to a quasilinear PDE for the conformal factor.
The new barriers for the rescaled flow
$\frac{g(t)}{1+2t}$ provide improved $L^{\infty}$-bounds for the conformal factor over $\overline{\B}_{\cH}(x,R-J) \times [0,\ep]$.
Standard quasilinear PDE theory then gives, away from the parabolic boundary, derivative bounds for the conformal factor. 
Interpolating between the good $L^{\infty}$-bounds and the third order derivative bounds, for example, allows us to conclude the desired
Gauss curvature control on $\overline{\B}_{\cH}(x,R-J -2) \times 
[\de , \ep]$ for arbitrary $\de \in (0,\ep)$.
These estimates could be obtained throughout 
$\overline{\B}_{\cH}(x,R - J - \gamma)$
for any $0 < \gamma < R -J$ at the cost of introducing dependence
on $\gamma$; our choice of taking $\gamma = 2$ is only for convenience.

After replacing the original $J$ by $J+2$, we can combine these two steps to establish that the rescaled flow $\frac{g(t)}{1+2t}$ 
satisfies the 
barriers assumed in $(\bA)$ of \eqref{main_thm_assumed_ests} on 
$\overline{\B}_{\cH}(x,R-J) \times [0,\ep]$, whilst the Gauss curvature 
of $\frac{g(t)}{1+2t}$ is bounded between $-1-\al$ and $-1+\al$ on
$\overline{\B}_{\cH}(x,R-J) \times [\de , \ep]$. In particular, this means that the metric $\frac{g(\ep)}{1+2\ep}$ satisfies both 
$(\bA)$ and $(\bB)$ of \eqref{main_thm_assumed_ests}.
Thus if consider times $t \geq \ep$ and rescale the original flow 
$g(t)$ to a flow that takes 
$\frac{g(\ep)}{1+2\ep}$ as its initial metric, we can repeat the 
above procedure for the rescaled flow.

We proceed by iterating this procedure as many times as possible, until either the flows existence time is reached or 
the radius of the resulting ball becomes too small to support a 
further application. The explicit form of $\Tau_{max}$ is obtained by 
carefully tracking the number of iterations that can be made and for 
how long the estimates are established on each iteration. 
The rescaling involved ensures that each subsequent iteration 
establishes the barriers for $\frac{g(t)}{1+2t}$ over a longer time interval than the previous step did. This successive increasing 
leads to $\Tau_{max}$ growing exponentially with respect to the radius. 

After iterating as many times as possible, the rescaled flow
$\frac{g(t)}{1+2t}$ satisfies the barrier estimates of $(\bA)$ 
in \eqref{main_thm_assumed_ests} up until time $\Tau_{max}.$
An additional combination of the barrier estimates and standard quasilinear PDE theory gives the Gauss curvature estimates in \eqref{conc_of_main_thm_2}. 
The time delay in Theorem \ref{mainresult} is a consequence of 
the quasilinear PDE theory only being valid away from the parabolic boundary. In Theorem \ref{hypat0}, the equality at $t=0$ gives 
us estimates at $t=0$ that allow us to invoke
variants that do not require 
moving away from the time $t=0$ portion of the parabolic boundary,
enabling us to avoid any time-delay before controlling the 
Gauss curvature.
This single additional step is the only difference between the 
proofs of Theorems \ref{hypat0} and \ref{mainresult}.

The techniques used to prove our main results exploit many advantageous facts about Ricci flow specific to dimension $2$ (cf. Section \ref{2d_advantage}).
Hence they cannot generalise to higher dimensions. However, there are no obvious non-artificial obstructions to the higher dimensional analogues, and we make the following
conjecture that the same phenomenon is valid in higher dimensions.

\begin{conjecture}[{\bf{Improved time control with equality on initial ball}}]
Let $n \in \N$ such that $n \geq 3.$ There are constants $\ca = \ca(n) > 0,$ $c = c(n) > 0$ and $\cR = \cR (n) >0$ 
for which the following holds: 

Let $R \geq \cR$ and suppose that $g(t)$ is a smooth complete Ricci flow of bounded curvature on a smooth $n$-dimensional manifold $\m,$
defined for all $t \in [0,T]$ for some $T > 0,$ and, for some $ x \in \m,$ suppose we have that $\left( \B_{g(0)} (x,R) , g(0) \right)$ is isometric to
a hyperbolic ball of radius $R.$
Then at $x\in \m$ we have that 
$$ | \Rm |_{g(t)}(x) \leq \ca \qquad \text{for all} \qquad 0 \leq t \leq \Tau_{max} := \min \{ T , e^{c R} \}.$$ 
\end{conjecture}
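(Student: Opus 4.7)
The plan is to translate the conjecture into a stability problem for the self-similar hyperbolic Ricci flow and then combine Perelman's pseudolocality with an iterative propagation argument that exploits the exponential volume growth of hyperbolic balls to upgrade the usual $r_0^2$-type pseudolocality control to $e^{cR}$-type control at the centre.

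First I would rescale. Introduce
$$\tilde g(s) := \frac{g(t(s))}{1+2(n-1)t(s)}, \qquad t(s) := \frac{e^{2(n-1)s}-1}{2(n-1)},$$
so that the hyperbolic Ricci flow $(1+2(n-1)t)g_{\HH}$ becomes the stationary metric $g_{\HH}$, and $\tilde g(s)$ satisfies the damped flow $\partial_s \tilde g = -2\Ric_{\tilde g} - 2(n-1)\tilde g$ for which $g_{\HH}$ is a fixed point. The target interval $t \in [0, e^{cR}]$ corresponds to $s \in [0, c'R]$ with $c' = c/(2(n-1))$, and since $|\Rm|_{g(t)} = (1+2(n-1)t)^{-1}|\Rm|_{\tilde g(s)}$, any uniform bound $|\Rm|_{\tilde g(s)}(x) \leq C(n)$ for $s \in [0, c'R]$ would imply the conjectured estimate $|\Rm|_{g(t)}(x) \leq \ca$.

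Next I would carry out an initial pseudolocality step. For any $y \in \B_{g(0)}(x, R-1)$, the ball $(\B_{g(0)}(y,1), g(0))$ is isometric to a unit hyperbolic ball and hence, at sufficiently small scales, is almost-Euclidean in the sense required by Theorem 10.1 of \cite{Perelman}. Combined with the bounded-curvature hypothesis, Perelman's pseudolocality then yields universal $\tau_0 = \tau_0(n) > 0$ and $C_0 = C_0(n) > 0$ with $|\Rm|_{g(t)}(y) \leq C_0$ for every such $y$ and every $t \in [0, \tau_0]$. Writing $s_0 = s_0(n)$ for the rescaled counterpart of $\tau_0$, this gives a bound on $|\Rm|_{\tilde g(s)}$ over $\B_{g(0)}(x, R-1)$ for $s \in [0, s_0]$. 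The core of the argument would then be iteration. The inductive claim I would aim to establish is the existence of $\eta = \eta(n) > 0$ and $b = b(n) \in (0,1)$ such that, for each $k \in \N$ with $k\eta \leq R - 1$, the metric $\tilde g(ks_0)$ is $b$-close to $g_{\HH}$ in a suitable $C^2$-type sense throughout $\B_{g(0)}(x, R - k\eta)$. Granting this, a rescaled pseudolocality step advances by a further $s_0$ at the cost of a hyperbolic shell of width $\eta$. After $K \approx (R-1)/\eta$ iterations the controlled region still contains a unit ball around $x$, yet we have reached $s \sim R$, which unrescales to $t \sim e^{2(n-1)s_0 R/\eta}$, matching the conjecture with $c := 2(n-1)s_0/\eta$.

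The hard part will be executing this inductive step, that is, converting a bare curvature bound at the end of one pseudolocality step into the almost-hyperbolic metric bound needed to start the next. In dimension two the author circumvents this difficulty by working entirely with the conformal factor and the scalar Gauss curvature, for which sharp ODE barrier comparisons are available. In dimension $n \geq 3$ one must propagate closeness of the full metric tensor together with all of its sectional curvatures; the natural tools, DeTurck-gauge stability anchored at $g_{\HH}$ together with tensorial maximum principles for $\Rm$, typically degrade by a factor of $e^{Ct}$ per unit rescaled time, which over the target interval $s \sim R$ would swamp the desired bounds. Finding a monotone quantity or a refined linearised stability estimate adapted to $g_{\HH}$ that survives exponentially long rescaled times is therefore the essential missing input, and is the principal reason the statement is offered only as a conjecture.
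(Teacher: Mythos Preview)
The statement you are addressing is presented in the paper as an open \emph{conjecture}, not a theorem; the paper offers no proof and explicitly remarks that its two-dimensional techniques ``cannot generalise to higher dimensions.'' There is therefore no paper proof to compare against.

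Your proposal is, accordingly, not a proof but a heuristic outline, and you are candid about this in your final paragraph. The scheme you sketch --- rescale so that the hyperbolic flow becomes stationary, use pseudolocality for a unit-time step, then iterate at the cost of a fixed hyperbolic shell per step --- is precisely the higher-dimensional analogue of the paper's two-dimensional argument (Lemma~\ref{Barriers} followed by Lemma~\ref{curvature_control}, iterated as in Theorem~\ref{over_arch_thm}). You have also correctly isolated the genuine obstruction: in dimension two the inductive step is executed via scalar barrier comparisons for the conformal factor (Theorem~\ref{comp princ}) together with interior parabolic regularity, whereas for $n \geq 3$ one would need to propagate $C^2$-closeness of the full metric tensor to $g_{\HH}$ through each step, and no adequate replacement for the conformal-factor comparison is currently known. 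That missing ingredient is exactly why the author leaves the statement as a conjecture, so your assessment is accurate.
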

\vskip 2pt
\noindent 
We further expect that the hypotheses of the previous conjecture can be weakened to almost-hyperbolic hypotheses in a similar spirit to the hypotheses of Theorem \ref{mainresult}.
The remainder of the paper is structured as follows. In Section \ref{2d_advantage} we collect together several well-known facts about two-dimensional Ricci flow and hyperbolic geometry.
In Section \ref{preserve_lemmata} we prove several supplementary lemmata recording how (and in what sense) our local almost-hyperbolic hypotheses are preserved under Ricci flow.
Finally in Section \ref{main_results_obtained} we provide proof of both Theorem \ref{hypat0} and Theorem \ref{mainresult}. 
In fact, both are consequences of Theorem \ref{over_arch_thm}. 

\vskip 4pt
\noindent
\emph{Acknowledgements:} 
This work was supported by EPSRC doctoral fellowship EP/M506679/1. The author would like to thank Peter Topping for numerous discussions on this topic.

\section{Preliminary Material}
\label{2d_advantage}
On a smooth two-dimensional surface we have that $\Ric_g = \K_g \cdot g.$ 
Thus the Ricci flow equation \eqref{rf-eqn} becomes 
\beq
	\label{'2dric} \frac{\partial}{\partial t} g(t) = -2 \K_{g(t)} \cdot g(t).
\eeq
Therefore the Ricci flow moves within a fixed conformal class.
If we pick a local isothermal complex coordinate $z = x+ iy$ on $U \subset \m$ we can write the metric (on $U$) as 
$g = e^{2u} |dz|^2$ for a scalar conformal factor $u \in C^{\infty} (U).$
A computation shows that, under Ricci flow, the metric's conformal factor satisfies 
\beq
	\label{2dric} 
		\frac{\partial u}{\partial t}  = e^{-2u} \Delta u = - \K_{g(t)} 
\eeq 
where $\Delta := \frac{\partial^2}{\partial x^2} + \frac{\partial^2}{\partial y^2}$ is defined with respect to the local coordinate $z=x+iy.$

Let $h$ be the complete conformal metric of constant Gauss curvature $-1$ on $\D := \{ z \in \C : |z| < 1 \}$ 
which may be globally written as $h = e^{2\vph} |dz|^2$ where $\vph (z) := \log \frac{2}{1-|z|^2}.$ 
Throughout we work on smooth surfaces $(\m , \cH)$ that contain a point $x \in \m$ such that for some $R > 0$ the ball $\B_\cH (x,R) \subset \subset \m$ and we have that
$\left( \B_\cH (x,R) , \cH \right)$ is isometric to a hyperbolic disc of radius $R,$ i.e. to $( \B_h (0,R) , h ).$
Clearly any smooth Ricci flow $g(t)$ defined on $\B_{\cH} (x,R)$ for all $t \in [0,T]$ may be viewed as a smooth Ricci flow defined on $\B_h(0,R)$ for all $t \in [0,T].$

Suppose that, for some $w \in \D$ and $r >0,$ we have a smooth Ricci flow $g(t)$ defined on $\B_h (w,r)$ for all $t \in [0,T].$ 
By choosing a local isothermal complex coordinate $z,$ we can write $g = e^{2u} |dz|^2$ throughout $\B_h (w,r) \times [0,T]$
for a smooth scalar function $u : \B_h (w,r) \times [0,T] \to \R.$
Choosing a different local isothermal complex coordinate will induce a different conformal factor,
however, the difference of two conformal factors is invariantly defined. 

Given any $w \in \D$ we may choose a 
M\"{o}bius diffeomorphism $\Psi$
(an isometry of $\D$ with respect to the hyperbolic metric $h$)
mapping $0$ to $w.$
Consider smooth metrics $g_1 = e^{2\phi_1}|dz|^2$ and $g_2 = e^{2\phi_2}|dz|^2$ on $\B_h (w,r)$ for some $r > 0,$
and assume that for $A,B > 0$ we have 
$Ag_1 \leq g_2 \leq Bg_1$ throughout $\B_h (w,r)$.
Then the pullback metrics $\Psi^{\ast}g_1$ and $\Psi^{\ast}g_2$ are defined throughout $\B_h(0,r)$ and satisfy that
$A\Psi^{\ast}g_1 \leq \Psi^{\ast}g_2 \leq B\Psi^{\ast} g_1$
throughout $\B_h(0,r)$.

In general, the new conformal factors $\hat{\phi}_1$ and 
$\hat{\phi}_2$, for the pullback metrics $\Psi^{\ast}g_1$ and 
$\Psi^{\ast}g_2$ respectively, 
will be different from $\phi_1$ and $\phi_2$.
However, we will have the same pointwise bounds for 
$\hat{\phi}_1 - \hat{\phi}_2$ throughout $\B_h(0,r)$
as we had for $\phi_1 - \phi_2$ throughout $\B_h(w,r)$.
That is, from $Ag_1 \leq g_2 \leq Bg_1$ throughout $\B_h (w,r)$ 
we may conclude that 
$\log A + 2\phi_1 \leq 2\phi_2 \leq \log B + 2\phi_1$
throughout $\B_h (w,r)$.
Since we still have that
$A\Psi^{\ast}g_1 \leq \Psi^{\ast}g_2 \leq B\Psi^{\ast} g_1$
throughout $\B_h(0,r)$ we can still conclude that
$ \log A + 2\hat{\phi}_1 \leq 2 \hat{\phi}_2 
\leq \log B + 2\hat{\phi}_1$ throughout $\B_h(0,r)$.
This will frequently be exploited, in the special case that 
$g_1 \equiv h$, to reduce working near a point 
$w \in \D$ to working near the origin $0 \in \D$.

Frequently it will be convenient to switch between the hyperbolic distance from $0$ and the Euclidean distance from $0$ on $\D.$ 
For any $z \in \D$ we have $d_h (0,z) =  \log \left[ \frac{1+|z|}{1-|z|} \right] =  2\tanh^{-1} (|z|)$ 
and hence $\B_h (0,R) = \D_{ \tanh (R/2)}.$
Here we use the notation that $\D_{\rho} := \left\{ z \in \D : |z| < \rho \right\}$ for $0 < \rho < 1.$ 
With a view to later requiring lower bounds on certain radii, we record the following elementary lower bound for $\tanh.$

\begin{lemma}[Elementary lower bound for $\tanh$]
\label{tanh_bound}
For any $x \in (0,\infty)$ we have the lower bound 
\beq
	\label{tanh_ineq} 
		\tanh (x) \geq 1 - \frac{1}{x}.
\eeq
\end{lemma}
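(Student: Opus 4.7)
The plan is a direct computation. First I would rewrite $\tanh(x)$ in the form
\[
\tanh(x) = \frac{e^x - e^{-x}}{e^x + e^{-x}} = 1 - \frac{2}{e^{2x}+1},
\]
so that the claimed inequality \eqref{tanh_ineq} is equivalent to
\[
\frac{2}{e^{2x}+1} \leq \frac{1}{x}, \qquad \text{i.e.} \qquad e^{2x} + 1 \geq 2x,
\]
for all $x \in (0,\infty)$.

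Next I would dispose of this in two ways depending on the size of $x$. For $x \in (0,1]$ the right-hand side of \eqref{tanh_ineq} satisfies $1 - 1/x \leq 0 < \tanh(x)$, so the inequality is immediate. For $x > 1$ (indeed for all real $x$) the standard elementary inequality $e^y \geq 1 + y$ applied with $y = 2x$ yields $e^{2x} \geq 1 + 2x$, and hence $e^{2x} + 1 \geq 2 + 2x \geq 2x$, which is exactly the reformulation obtained above.

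There is no real obstacle here: the entire lemma reduces to the convexity inequality $e^y \geq 1 + y$ after one algebraic manipulation of $\tanh$. The only thing worth flagging is that the bound \eqref{tanh_ineq} is essentially trivial for small $x$ (where the right-hand side is negative) and becomes tight only as $x \to \infty$, which is consistent with its intended later use in providing lower bounds on radii $\tanh(R/2)$ of the form $1 - 2/R$ for $R$ large.
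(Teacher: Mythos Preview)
Your proof is correct and follows the same overall shape as the paper's argument: both split into the trivial range $x \in (0,1]$, where $1 - 1/x \leq 0$, and the nontrivial range $x \geq 1$. The difference is in how the second range is handled. The paper sets $F(x) := x\tanh(x) - x + 1$ and shows $F' \geq 0$ on $[1,\infty)$ by an explicit computation of the derivative, concluding $F(x) \geq F(1) = \tanh(1) > 0$. You instead rewrite $\tanh(x) = 1 - 2/(e^{2x}+1)$ and reduce the claim to $e^{2x}+1 \geq 2x$, which drops out of the convexity inequality $e^y \geq 1+y$. Your route is a touch more elementary (no differentiation needed) and makes transparent why the bound is so loose; the paper's route has the minor advantage of packaging everything into a single auxiliary function whose sign is tracked. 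Either way the lemma is a one-line fact, and your argument is entirely adequate.
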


\begin{proof}[Proof of Lemma \ref{tanh_bound}]
Define $F : (0,\infty) \to (0,1)$ by $F(x) := x \tanh (x) - x + 1.$ It suffices to establish that $F(x) \geq 0$ throughout $(0,\infty).$
Since $\tanh(x) > 0$ on $(0,\infty)$ it is apparent that $F(x) > 0$ for every $x \in (0,1).$
For $x \geq 1$ we compute the derivative of $F$ and observe
$$ F'(x) = \tanh(x) - 1 + x \sech^2 (x) = \frac{ (4x-2)e^{2x} - 2}{(e^{2x} + 1)^2} \geq 0.$$
Thus, for $x \geq 1,$ we have that $F(x) \geq F(1) = \tanh (1) >0.$ 
Therefore $F(x) > 0$ for all $x \in (0,\infty).$ 
\end{proof}
\vskip 2pt
\noindent
Finally we recall the following elementary weak comparison principle, found in \cite{Giesen}, for example. 

\begin{theorem}[Elementary comparison principle; Theorem 2.3.1 in \cite{Giesen}]
\label{comp princ}
Let $\mathcal{U} \subset \mathbb{C}$ be an open, bounded domain and, for some $T>0,$ suppose 
$w , v \in C^{\infty} ( \overline{\cU} \times [0,T])$
are both solutions to $\frac{\partial \psi}{\partial t} = e^{-2\psi} \Delta \psi$ throughout $U \times [0,T].$ If $v(z,0) \geq w(z,0)$ throughout $\cU$
and $v(z,t) \geq w(z,t)$ throughout $\partial \cU \times [0,T]$ then we may conclude that $v(z,t) \geq w(z,t)$ throughout $\overline{\cU} \times [0,T].$ 
\end{theorem}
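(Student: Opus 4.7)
The plan is to set $f := v - w$, show $f$ satisfies a linear parabolic equation with bounded coefficients, and then invoke the classical weak maximum principle to conclude $f \geq 0$. Subtracting the two equations and rearranging yields
\[
\partial_t f = e^{-2v}(\Delta v - \Delta w) + (e^{-2v} - e^{-2w}) \Delta w = e^{-2v} \Delta f + (e^{-2v} - e^{-2w}) \Delta w.
\]
The nonlinearity sits in the factor $e^{-2v} - e^{-2w}$, and I would eliminate it by the mean value theorem: at each point of $\overline{\cU} \times [0,T]$ there is a value $\xi$ lying between $v$ and $w$ with $e^{-2v} - e^{-2w} = -2 e^{-2\xi} f$. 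This gives the linear parabolic equation
\[
\partial_t f = e^{-2v} \Delta f - 2 (\Delta w) e^{-2\xi} f,
\]
all of whose coefficients are bounded on the compact set $\overline{\cU} \times [0,T]$ by smoothness of $v$ and $w$.

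To run the maximum principle cleanly I would absorb the sign ambiguity in the zeroth-order coefficient by setting $g := e^{-Kt} f$, where $K > 0$ is chosen so large that $c := K + 2 (\Delta w) e^{-2\xi} \geq 1$ throughout $\overline{\cU} \times [0,T]$. A direct computation then gives $\partial_t g = e^{-2v} \Delta g - c g$ with $c \geq 1 > 0$, while the hypotheses on $v$ and $w$, together with positivity of $e^{-Kt}$, guarantee $g \geq 0$ on the parabolic boundary $(\overline{\cU} \times \{0\}) \cup (\partial \cU \times [0,T])$. If $g$ were to take a negative value somewhere, then by compactness and the boundary sign condition it would attain a negative minimum $m < 0$ at some point $(z_0, t_0)$ with $z_0 \in \cU$ and $t_0 > 0$. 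There $\partial_t g(z_0, t_0) \leq 0$ and $\Delta g(z_0, t_0) \geq 0$, and the equation forces
\[
0 \;\geq\; \partial_t g(z_0, t_0) \;=\; e^{-2v(z_0, t_0)} \Delta g(z_0, t_0) - c(z_0, t_0)\, m \;\geq\; -c(z_0, t_0)\, m \;>\; 0,
\]
a contradiction. Hence $g \geq 0$, and therefore $v \geq w$ throughout $\overline{\cU} \times [0,T]$.

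The only genuine obstacle here is the nonlinear factor $e^{-2\psi}$ in the equation; the mean value theorem linearisation reduces the problem to a standard weak maximum principle for a linear parabolic operator, after which everything is routine. A minor subtlety worth flagging is that the auxiliary function $\xi$ only needs to exist pointwise, and any convenient selection (for instance $\xi := v$ on the set $\{f=0\}$, where the linearisation identity holds trivially) suffices; its boundedness, which is what matters for the choice of $K$, follows from boundedness of $v$ and $w$ on the compact domain.
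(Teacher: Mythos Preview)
Your argument is correct and is the standard proof of this comparison principle: linearise the difference via the mean value theorem, absorb the zeroth-order term by an exponential weight, and apply the weak maximum principle at a putative negative interior minimum. The paper does not give its own proof of this statement at all; it simply records it as Theorem~2.3.1 of Giesen's thesis and uses it as a black box, so there is nothing to compare against beyond noting that your approach is exactly what one would expect the cited proof to contain.
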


\section{Hyperbolic Preservation Lemmata}
\label{preserve_lemmata}
Throughout, when referring to metric balls we use the convention that those denoted by $\B$ are taken to be open, whilst those denoted by $\overline{\B}$ are taken to be closed.

Here we obtain a few lemmata recording how, and in what sense, various almost-hyperbolic conditions propagate forwards in time under Ricci flow.
The first result establishes that if a flow $g(t)$ is initially locally almost-hyperbolic, then by reducing to a controllably smaller spatial region, 
the rescaled flow $\frac{g(t)}{1+2t}$ must remain close to being hyperbolic in a continuous sense. The precise result is the following.

\begin{lemma}[Barriers for rescaled flow]
\label{Barriers}
There is a universal constant $\ep > 0$  such that given any $b \in \left(0 , \frac{1}{2}\right]$ there exists a constant $J = J(b) > 0$ for which the following holds: 

Assume that $R \geq J$ and that $(\m , \cH )$ is a smooth surface. Suppose that for some $x \in \m$ we have both $\B_\cH (x,R) \subset \subset \m$ and that
$\left( \B_\cH (x,R) , \cH \right)$ is isometric to a hyperbolic disc of radius $R.$
Suppose $g(t)$ is a smooth Ricci flow defined on $\m$ for all $t \in [0,T],$ for some $T > 0,$ with $g(0)$ conformal to $\cH,$
and satisfying that for any $z \in \B_{\cH} (x,R)$ and $t \in [0, T]$ we have $ \B_{g(t)} (z,1) \subset \subset \m.$ 
Further suppose that 
\beq
	\label{AB} 
		(i) \quad (1 - b) \cH \leq  g(0) \leq (1 + b) \cH \qquad \text{and} \qquad (ii) \quad \left| \K_{g(0)} \right| \leq 2
\eeq
throughout $\B_\cH (x , R).$ Let $\tau := \min \{ \ep , T \} > 0.$ Then we may conclude that 
\beq
	\label{Barriers_preserved} 
		(1 - b) \cH \leq \frac{g(t)}{1 + 2t} \leq ( 1 + b ) \cH
\eeq
throughout $\ovB_\cH (x , R - J) \times [0, \tau].$
\end{lemma}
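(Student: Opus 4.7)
My plan is to work in the conformal representation of the 2D Ricci flow on the Poincar\'e disc and apply the weak comparison principle (Theorem \ref{comp princ}) against carefully chosen sub- and super-solutions built from the hyperbolic Ricci flow.

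First I would use the isometry $(\B_\cH(x,R),\cH) \cong (\B_h(0,R),h)$ to reduce to $\B_h(0,R)\subset \D$ and write $g(t) = e^{2u(t)}|dz|^2$, $h = e^{2\vph}|dz|^2$ with $\vph(z) = \log\frac{2}{1-|z|^2}$. The target \eqref{Barriers_preserved} is then equivalent to the two-sided pointwise bound
\[
\vph(z) + \tfrac12\log\bigl((1-b)(1+2t)\bigr) \;\leq\; u(z,t) \;\leq\; \vph(z) + \tfrac12\log\bigl((1+b)(1+2t)\bigr)
\]
on $\ovB_h(0,R-J)\times[0,\tau]$. Both $u$ and the shifted hyperbolic conformal factors $\vph + \tfrac12\log(1\pm b+2t)$ are classical solutions of the 2D Ricci flow PDE \eqref{2dric}, so the comparison principle is the natural tool, with the initial slice inequality supplied by hypothesis (i).

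The essential difficulty is that the comparison principle demands control of $u$ on the spatial boundary, and no a priori such bound is available at $\partial\B_h(0,R)$. I would resolve this by upgrading the upper barrier to one which blows up there: let $\vph_R(z) := \log\frac{2\rho}{\rho^2-|z|^2}$ with $\rho = \tanh(R/2)$ be the conformal factor of the intrinsic complete Poincar\'e metric on $\B_h(0,R)$ viewed as a topological disc. The Schwarz--Pick lemma applied to the inclusion $\B_h(0,R)\hookrightarrow \D$ gives $\vph_R\geq\vph$ on $\B_h(0,R)$ with $\vph_R\to+\infty$ at $\partial\B_h(0,R)$. Then $U_+(z,t) := \vph_R(z) + \tfrac12\log(1+b+2t)$ is a genuine Ricci flow solution which dominates $u(\cdot,0)$ by hypothesis (i) combined with Schwarz--Pick, and which trivially dominates $u$ on the spatial boundary since it blows up there. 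Theorem \ref{comp princ} therefore yields $u \leq U_+$ on $\B_h(0,R)\times[0,T]$. A symmetric sub-solution barrier --- either via an analogous degenerating Poincar\'e construction or by combining a direct linear subsolution with the curvature control from Chen's pseudolocality (Theorem \ref{Chenpseudo}) --- provides the matching lower bound.

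To upgrade this barrier bound to the sharp target on $\ovB_h(0,R-J)$, I would combine two ingredients. First, quantitative Schwarz--Pick: from Lemma \ref{tanh_bound} and the explicit formula $\vph_R - \vph = \log\frac{\rho(1-|z|^2)}{\rho^2-|z|^2}$, the gap $\vph_R - \vph$ on $\ovB_h(0,R-J)$ can be made as small as desired by choosing $J=J(b)$ large (for $R\geq J$). Second, a strict-supersolution observation: a direct computation shows $\vph + \tfrac12\log((1+b)(1+2t))$ is a strict supersolution of \eqref{2dric}, in that $\partial_t U - e^{-2U}\Delta U = \tfrac{b}{(1+b)(1+2t)}>0$, so a maximum-principle analysis of $w := u - [\vph + \tfrac12\log((1+b)(1+2t))]$ rules out a first interior contact with $w=0$. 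Together these close the upper bound, and the lower bound is handled symmetrically.

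The hardest technical point will be closing the argument uniformly at small $t$: at $t=0$ the sharp target holds exactly by (i), but the Schwarz--Pick slack on the spatial boundary of $\B_h(0,R-J)$ is a fixed positive quantity, while the slack between the natural barrier $\vph+\tfrac12\log(1+b+2t)$ and the sharp target $\vph+\tfrac12\log((1+b)(1+2t))$ vanishes as $t\to 0^+$. Bridging this short-time regime cleanly is where the universal constant $\ep>0$ and the hypothesis $R\geq J(b)$ enter, and I expect to handle it by a delicate continuity argument reinforced by pseudolocality-type curvature control from Theorem \ref{Chenpseudo}, chosen so that for $t\in[0,\ep]$ either (i) still dominates or the Schwarz--Pick slack is already within the supersolution margin.
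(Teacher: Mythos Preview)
Your plan has a genuine gap at exactly the point you flag as ``the hardest technical point''. The blowing-up barrier $U_+(z,t)=\vph_R(z)+\tfrac12\log(1+b+2t)$ yields only $\frac{g(t)}{1+2t}\leq \frac{1+b+2t}{1+2t}\,h_R$, and since $h_R>h$ strictly on $\B_h(0,R)$ you never recover the \emph{exact} factor $(1+b)$: at $t=0$ you are off by $e^{2(\vph_R-\vph)}>1$ and this cannot be absorbed. Your proposed repair via the strict supersolution $V=\vph+\tfrac12\log((1+b)(1+2t))$ does not close this: the interior first-contact argument for $w=u-V$ rules out an interior maximum, but the maximum can sit on $\partial\B_h(0,R-J)$, where your only information is $u\leq U_+$, and $U_+\leq V$ there is equivalent to $\vph_R-\vph\leq \tfrac12\log\frac{(1+b)(1+2t)}{1+b+2t}$ --- the left side is a fixed positive number while the right side vanishes at $t=0$, so the boundary inequality fails on an initial time interval and the maximum principle gives nothing. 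Integrating the pseudolocality curvature bound $|\K_{g(t)}|\leq 4$ gives only $u\leq \vph+\tfrac12\log(1+b)+4t$, which again is strictly worse than $V$ for small $t$. The lower barrier is in worse shape: there is no ``degenerating Poincar\'e'' subsolution going to $-\infty$ at $\partial\B_h(0,R)$, so you have no boundary control whatsoever for the lower bound without pseudolocality.

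The paper's proof fixes this by reversing your order of operations. It \emph{first} invokes Chen's pseudolocality to obtain $|\K_{g(t)}|\leq 4$ on $\B_\cH(x,R-3/2)\times[0,\ep]$, hence the crude two-sided bound $u=\vph+\tfrac12\log(1\pm b)\pm 4\ep$ on that region. Then, after M\"obius-translating an arbitrary $w\in\ovB_h(0,R-J)$ to the origin, it runs the comparison principle on the fixed small disc $\D_j$ with $j=\tanh((J-2)/2)$, using barriers $H_\alpha,H_\mu$ built from the hyperbolic flow on discs of radii $\alpha\in(j,1)$ and $\mu>1$. The point is that $\alpha,\mu$ are \emph{free parameters}: they are chosen so that on $\partial\D_j$ the barriers exactly match the crude pseudolocality bound (this fixes $\alpha,\mu$ in terms of $j$ and $\ep$), while at the origin $\vph_\alpha(0)=\vph_\mu(0)=\vph(0)$ automatically, so the comparison yields $g(t)\leq(1+b+2t/\alpha^2)h$ and $g(t)\geq(1-b+2t/\mu^2)h$ at $0$. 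The sharp conclusion then reduces to the algebraic constraints $\alpha^2\geq\frac{1}{1+b}$ and $\mu^2\leq\frac{1}{1-b}$, and $J(b)$ is chosen precisely to force these. Your approach forfeits this freedom by pinning the barrier disc to $\B_h(0,R)$ itself.
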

\vskip 4pt
\noindent
Observe that $\cH_{\pm} (t) := (1 \pm b + 2t) \cH$ are both Ricci flows with $\cH_{+}(0) = (1+b)\cH$ and $\cH_{-}(0) = (1-b)\cH.$
Since $(1-b) \cH < \frac{\cH_{\pm}(t)}{1+2t} < (1+b) \cH$ for positive times $t > 0,$ it is reasonable to expect that 
on a smaller spatial region $g(t)$ should remain sandwiched as in \eqref{Barriers_preserved} for a definite amount of time.

As we will see in the proof, the Gauss curvature bound assumed in $(ii)$ of \eqref{AB} means that Theorem \ref{Chenpseudo} allows us to conclude that $(1-b)e^{-8t} \cH \leq g(t) \leq (1+b)e^{8t} \cH$
throughout $\ovB_\cH (x , R - 2) \times [0,\ep]$ for a universal $\ep > 0.$
By restricting $\ep$ to being sufficiently small, we see that this almost establishes \eqref{Barriers_preserved} in that we can deduce that $\frac{g(t)}{1+9t} \leq (1+b) \cH$
and $\frac{g(t)}{1-9t} \geq (1-b) \cH.$
The content of the lemma is to establish that we may replace $1+9t$ and $1-9t$ by the \emph{same} function $1+2t$ and still preserve the barriers 
for a universal time $\ep > 0.$

Our strategy is to pullback our flow $g(t)$ to $\D$ 
so that, if we still call the pulled back flow $g(t)$, we have the barriers 
$(1-b)e^{-8\ep} h \leq g(t) \leq (1+b)e^{8\ep}h$ 
throughout $\ovB_h(0,R-2) \times [0,\ep]$.
We can reduce to only needing to consider the origin $0$, 
instead of an arbitrary $w \in \ovB_h(0,R-J-2)$, by considering a 
suitable M\"{o}bius map $\D \to \D$
(cf. Section \ref{2d_advantage}).
Hence we need only prove that having these barriers on $\B_h(0,J)$ 
allows us to establish the barriers in 
\eqref{Barriers_preserved} at the origin.

We use the barriers from above to choose dilations 
$h_{\al}$ and $h_{\mu}$ of $h$ for which 
$(1-b)h_{\mu} \leq (1-b)e^{-8\ep} h$ on $\partial \B_h (0, J)$ and
$1+b)e^{8\ep}h \leq (1+b)h_{\al}$ on $\partial \B_h(0,J)$. 
We will also ensure that 
$h_{\mu} \leq h \leq h_{\al}$ so that 
$(1-b)h_{\mu} \leq g(0) \leq (1+b)h_{\al}$ throughout
the entirety of $\B_h (0,J)$.
These orderings will mean that the Ricci flow
taking $(1+b)h_{\al}$ as its initial metric remains an upper barrier
for $g(t)$ throughout the parabolic boundary of
$\B_{h}(0,J) \times [0,\ep]$,
whilst the Ricci flow taking $(1-b)h_{\mu}$ as its initial metric
is a lower barrier for $g(t)$ throughout the parabolic
boundary of $\B_h (0, J) \times [0,\ep]$.
Consequently, the comparison principle (Theorem \ref{comp princ})
yields that these flows are in fact barriers for 
$g(t)$ throughout the entirety of $\B_h(0,J) \times [0,\ep]$.
The result then follow from showing that, provided $J$ is 
chosen sufficiently large, the dilations $h_{\al}$ and $h_{\mu}$
can be chosen to
provide the improved control at the origin required to conclude 
the improved barriers for $\frac{g(t)}{1+2t}$ 
claimed in \eqref{Barriers_preserved}. 

\begin{proof}[Proof of Lemma \ref{Barriers}]
Let $h$ denote the complete conformal hyperbolic metric of constant Gauss curvature $-1$ on $\D.$
Observe that $\VolBB_h (z , r) \geq \pi r^2$ for all points $z \in \D$ and any radius $r \in (0,1].$  
Let $\ep > 0$ be the universal constant arising from appealing to
the pseudolocality result of Chen, Theorem \ref{Chenpseudo}, with $r_0$ and $v_0$ there equal to $\frac{1}{\sqrt{2}}$ and $\frac{\pi}{4}$ respectively.
In particular, this tells us that if $ ( M^2 , g(t))$ is a smooth Ricci flow defined for $t \in [0,T],$ where $T > 0$ is arbitrary, and if $y \in M$ such that $\B_{g(t)} \left( y , \frac{1}{\sqrt{2}} \right) \subset \subset M$ for all $t \in [0,T],$ 
$| \K_{g(0)}| \leq 2$ throughout $\B_{g(0)} \left( y , \frac{1}{\sqrt{2}} \right)$ and $\VolBB_{g(0)} \left( y , \frac{1}{\sqrt{2}} \right) \geq \frac{\pi}{8},$
then $|\K_{g(t)}(y)| \leq 4$ for all $t \in [0, \tau],$ where $\tau := \min \{ \ep , T \} > 0.$ 
We fix this universal $\ep > 0$ for the remainder of the proof.

Given $b \in \left(0 , \frac{1}{2} \right]$ we seek to specify a constant $J= J(b) > 0$ so that, on a closed $\cH$ ball of radius $R-J,$ 
the barriers in $(i)$ of \eqref{AB} are valid for positive times for the rescaled family $\frac{g(t)}{1+2t}.$
With the benefit of hindsight, it will suffice to take
\beq
	\label{def_of_J}
		J(b) := 2+ \frac{1}{b}\max \left\{ 4e^{10 \ep} , 12 \right\} > 2.
\eeq
After locally pulling back to the disc $\D,$  it will be convenient to work with the Euclidean distance.  
Recall from Section \ref{2d_advantage} that a $h$ ball of radius $r$ centred at $0 \in \D$ corresponds to a Euclidean ball of radius $\tanh (r/2)$ centred at $0.$
Later in the proof we will end up working on a $h$ ball of radius $J -2$ centred at the origin $0 \in \D,$ which corresponds to $\D_j$ where $j := \tanh ( (J-2)/2 ).$ 
For use later we record that the bounds in \eqref{def_of_J} give that
\beq
	\label{euc_dist_ests}
		j := \tanh \left(\frac{J-2}{2}\right) \geq \max \left\{ 1- \frac{b}{2}e^{-10\ep} , 1 - \frac{b}{6} \right\} > 0
\eeq
via the inequality $\tanh (y) \geq 1 - \frac{1}{y}$ for $y >0$ 
(cf. Lemma \ref{tanh_bound}).

With both $\ep > 0$ and $J > 0$ specified, we let $R \geq J,$ $T > 0$ and define $\tau := \min \{ \ep , T \} > 0.$
Assume that $g(t)$ is a smooth Ricci flow on $\m,$
defined for all $t \in [0, T],$ with $g(0)$ conformal to $\cH,$ and satisfying that for every $z \in \B_\cH (x,R)$ and every $t \in [0,T]$ we have $\B_{g(t)} (z , 1) \subset \subset \m.$ 
Further suppose $g(0)$ satisfies both estimates $(i)$ and $(ii)$ in \eqref{AB} throughout $\B_\cH(x,R).$

Since $R \geq J > 2$ we may consider $z_0 \in \B_{\cH} (x , R - 3/2)$ so that $\B_{\cH} ( z_0 , 1 ) \subset \subset \B_{\cH} (x, R).$
Moreover, the barrier estimates $(i)$ of \eqref{AB} ensure that
\beq
	\label{contains} 
		\B_{\cH} \left( z_0 , \frac{1}{2}\right) \subset \B_{g(0)} \left(z_0 , \frac{\sqrt{3}}{2\sqrt{2}}\right) \subset \B_{g(0)} \left(z_0 , \frac{1}{\sqrt{2}}\right)  \subset \B_{\cH} (z_0 , 1) \subset \subset \B_{\cH} (x , R).
\eeq
The inclusions of \eqref{contains} allow us to simultaneously conclude that $| \K_{g(0)}| \leq 2$ throughout $\B_{g(0)} \left( z_0 , \frac{1}{\sqrt{2}}\right)$ via $(ii)$ of \eqref{AB},
and that 
$\VolBB_{g(0)}  \left( z_0 , \frac{1}{\sqrt{2}} \right)  \geq \frac{\pi}{8}.$ 
Recalling how $\ep > 0$ was chosen, Theorem \ref{Chenpseudo} tells us that $| \K_{g(t)} (z_0) | \leq 4$ for all $t \in [0,\tau].$
Repeating for all such points $z_0$ allows us to conclude that $| \K_{g(t)}| \leq 4$ throughout $\B_{\cH} (x , R-3/2) \times [0,\tau].$
Recalling \eqref{'2dric}, estimate $(i)$ in \eqref{AB} and the Gauss curvature control allows us to conclude that
$(1-b)e^{-8\ep} \cH \leq g(t) \leq (1+b)e^{8\ep} \cH$
throughout $\B_{\cH} (x , R-3/2) \times [0,\tau].$

To establish that $(1-b) \cH \leq \frac{g(t)}{1+2t} \leq (1+b) \cH$ throughout $\ovB_{\cH} ( x , R-J) \times [0,\tau]$ we pull back to $\B_h (0,R) \subset \D.$ 
That is, we pull back via the isometry $F:(\B_h (0, R) , h) \to ( \B_\cH (x , R) , \cH).$ 
After doing so we have a smooth Ricci flow $F^{\ast} g(t)$ defined on $\B_h (0, R)$ for all $t \in [0,T],$ and in particular satisfying that
$(1-b)h \leq F^{\ast}g(0) \leq (1+b)h$ throughout $\B_h(0,R)$ and
$(1-b)e^{-8\ep}h \leq F^{\ast}g(t) \leq (1+b)e^{8\ep}h$ throughout $\B_h (0, R-3/2)  \times [0,\tau].$ 
If we can establish that $(1-b)h \leq \frac{F^{\ast}g(t)}{1+2t} \leq (1+b)h$ throughout $\ovB_h (0, R-J) \times [0,\tau]$ then
the isometry will allow us to conclude \eqref{Barriers_preserved} as required.

Given any $w \in \ovB_h (0, R-J) \subset \D$ we can choose a M\"{o}bius diffeomorphism $\D \to \D$ mapping the origin $0$ to $w.$ 
Recalling from Section \ref{2d_advantage} that the pointwise difference between any metric and the hyperbolic metric $h$ are preserved under pulling back via M\"{o}bius diffeomorphisms,
establishing the following claim is sufficient to complete the proof.
\begin{claim}
Suppose $g(t)$ is a smooth Ricci flow on $\B_h (0, J - 3/2),$ defined for all $t \in [0,\tau],$ and satisfying both 
$(1-b)h \leq g(0) \leq (1+b)h$ throughout $\B_h (0, J - 3/2)$ and $(1-b)e^{-8\ep}h \leq g(t) \leq (1+b)e^{8\ep} h$ throughout $\B_h (0, J - 3/2) \times [0,\tau].$
Then at the origin $0 \in \D$ we have $(1-b)h \leq \frac{g(t)}{1+2t} \leq (1+b)h$ for all $t \in [0,\tau].$
\end{claim}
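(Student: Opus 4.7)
Writing $g(t) = e^{2u}|dz|^2$ and $h = e^{2\varphi}|dz|^2$ with $\varphi(z) = \log\tfrac{2}{1-|z|^2}$, the function $u$ satisfies $\partial_t \psi = e^{-2\psi}\Delta\psi$ on $\D_{j_0} \times [0,\tau]$, where $j_0 = \tanh((J-3/2)/2)$. The plan is to apply the comparison principle of Theorem \ref{comp princ} twice, deriving the upper and lower bounds at the origin separately. Since $\tfrac{1 \pm b + 2t}{1+2t} \in [1-b,\, 1+b]$ for all $t \geq 0$, it is enough to prove the more convenient sandwich $(1 - b + 2t)h \leq g(t) \leq (1 + b + 2t)h$ at the origin for $t \in [0,\tau]$; in terms of conformal factors, this is a comparison of $u$ with $w_\pm(z,t) := \varphi(z) + \tfrac{1}{2}\log(1 \pm b + 2t)$, the conformal factors of the shifted hyperbolic Ricci flows $(1 \pm b + 2t)h$, which themselves solve the same PDE.

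For the upper bound I would apply Theorem \ref{comp princ} on the smaller disc $\D_j$, where $j = \tanh((J-2)/2)$ from \eqref{euc_dist_ests}, comparing $u$ against the conformal factor of the Ricci flow $(1+b+2t)h^{(j)}$, with $h^{(j)}(z) = \tfrac{4j^2}{(j^2-|z|^2)^2}|dz|^2$ the complete hyperbolic metric on $\D_j$. Since $h \leq h^{(j)}$ on $\D_j$, the initial comparison follows from the hypothesis $g(0) \leq (1+b)h$. The lateral comparison on $\partial \D_{j-\delta}$ for small $\delta > 0$ holds because the conformal factor of $h^{(j)}$ diverges as $|z| \to j$ while the available lateral estimate $u \leq \varphi + \tfrac{1}{2}\log[(1+b)e^{8\ep}]$ is bounded. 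Applying Theorem \ref{comp princ} on $\overline{\D_{j-\delta}} \times [0,\tau]$ and then sending $\delta \to 0$ yields $g(t) \leq (1+b+2t)h^{(j)}$ throughout $\D_j \times [0,\tau]$. Evaluating at the origin and using $h^{(j)}(0) = j^{-2}h(0)$ gives
\[
\frac{g(t)(0)}{1+2t} \leq \frac{1+b+2t}{j^{2}(1+2t)}\, h(0),
\]
and the lower bound $j \geq 1 - \tfrac{b}{2}e^{-10\ep}$ from \eqref{euc_dist_ests} (coupled with $\tau \leq \ep$ and $b \leq 1/2$) is engineered so that the right-hand side is at most $(1+b)h(0)$. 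The lower bound is obtained by the symmetric argument, using the hyperbolic-type metric $h^{(a)}$ on an enlarged disc $\D_a$ with $a > 1$ chosen so that the lateral loss $e^{-8\ep}$ is absorbed, producing the matching estimate $\frac{g(t)(0)}{1+2t} \geq (1-b)h(0)$.

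The main obstacle is the delicate numerical reconciliation at the origin: the multiplicative loss $j^{-2}$ coming from using the strict subdisc $\D_j$ in place of $\D_1$ must be balanced precisely against the Ricci-flow headroom $\tfrac{1+b+2t}{1+2t} \leq 1+b$ in order to land within the target $(1+b)h(0)$, with a completely analogous tuning required on the lower-bound side. This balance is exactly what the choice $J = 2 + b^{-1}\max\{4e^{10\ep}, 12\}$ in \eqref{def_of_J} is designed to produce via Lemma \ref{tanh_bound} and the resulting fine estimates of \eqref{euc_dist_ests}.
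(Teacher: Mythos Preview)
Your approach has a genuine gap in the numerical reconciliation step, and it occurs already at $t=0$. With your upper barrier $(1+b+2t)h^{(j)}$ on $\D_j$, the comparison principle indeed yields $g(t)(0)\leq (1+b+2t)\,h^{(j)}(0) = \tfrac{1+b+2t}{j^{2}}\,h(0)$. But then the inequality you need, $\tfrac{1+b+2t}{j^{2}(1+2t)} \leq 1+b$, fails at $t=0$: it reduces to $j^{2}\geq 1$, which is impossible since $j=\tanh((J-2)/2)<1$. The same problem arises symmetrically on the lower side: with $a>1$ you would need $a^{2}\leq 1$ at $t=0$. No choice of $J$ can repair this, because the multiplicative offset $h^{(j)}(0)=j^{-2}h(0)$ is fixed and strictly larger than $h(0)$, and at $t=0$ there is no Ricci-flow headroom to absorb it.

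The paper circumvents this by a different choice of barrier. Instead of the curvature $-1$ metric $h^{(j)}$ on $\D_j$, it uses the pulled-back conformal factor $\varphi_\alpha(z):=\varphi(z/\alpha)$ for a carefully chosen $\alpha\in(j,1)$, giving a metric of constant curvature $-\alpha^{-2}$ that is still complete on $\D_\alpha\supset\supset\D_j$. The crucial point is that $\varphi_\alpha(0)=\varphi(0)$, so there is \emph{no} multiplicative loss at the origin; all the loss is transferred to the time-dependent factor via the curvature, producing $g(t)(0)\leq\bigl(1+b+\tfrac{2t}{\alpha^{2}}\bigr)h(0)$. This vanishes back to $(1+b)h(0)$ at $t=0$, and the condition to close the argument becomes $\alpha^{2}\geq\tfrac{1}{1+b}$, which \emph{is} achievable. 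The specific value of $\alpha$ is then tuned so that $\varphi_\alpha=\varphi+4\ep$ exactly on $\partial\D_j$, matching the crude lateral bound from the curvature estimate; this replaces your divergence argument and is where the quantitative constraint on $J$ in \eqref{def_of_J} actually enters. The lower barrier uses the analogous $\varphi_\mu$ with $\mu>1$.
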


\begin{claimproof}
Let $j_0 := \tanh \left(\frac{J - \frac{3}{2}}{2}\right)$ and recall that $j = \tanh \left(\frac{J-2}{2}\right)$ so that $\B_h (0, J -2) = \D_j \subset \subset \D_{j_0} = \B_h (0, J - 3/2).$
Let $u : \D_{j_0} \times [0,\tau] \to \R$ be the smooth scalar function for which $g(t) = e^{2u} |dz|^2.$
In particular, we have that $u \in C^{\infty} \left( \overline{\D_j} \times [0,\tau] \right).$
Recalling that $h = e^{2\vph}|dz|^2,$ where $\vph(z) = \log \left[ \frac{2}{1-|z|^2}\right],$ the barriers $(1-b)h \leq g(0) \leq (1+b)h$ and $(1-b)e^{-8\ep} h \leq g(t) \leq (1+b)e^{8\ep}h$
become
\beq
	\label{K0}
		\frac{1}{2} \log (1-b) \leq u(z,0) - \vph(z) \leq \frac{1}{2} \log (1+b) 
\eeq
for $z \in \D_{j_0},$ and 
\beq
	\label{K1} 
		\frac{1}{2} \log (1-b) -4 \ep \leq  u( z ,t) - \vph (z)   \leq  \frac{1}{2} \log (1+b) + 4 \ep
\eeq 
for $ (z,t) \in \D_{j_0} \times [0,\tau]$ respectively.

We now define suitable Ricci flows between which our flow $g(t)$ will remain sandwiched. 
The upper barrier will follow from considering a complete Ricci flow $h_{\al}(t)$ on the disc of radius $\al = \al(j) \in (j,1)$ with initial Gaussian curvature $-(1+b)^{-1} \al^{-2}$
where $\al$ is taken to be $\al (j) := \left( \frac{ e^{4\ep} j^2}{e^{4\ep} + j^2 - 1} \right)^{\frac{1}{2}}.$
By observing that $\al(s)$ is strictly increasing as a function of $s$ and that $\al(0)=0$ and $\al (1) = 1$ we see that $\al(j) \in (0,1).$
A simple computation verifies that $\al(j) > j$ as required.
The conformal factor of this flow may be written as 
\beq
	\label{upper_barrier_conf_fac}
		H_{\al} (z,t) := \vph_{\al}(z) + \frac{1}{2} \log (1+b) + \frac{1}{2} \log \left( 1 + \frac{2t}{(1+b) \al^2} \right)
\eeq
where $\vph_{\al} (z) := \vph \left( \frac{z}{\al} \right)$ so that $\vph \leq \vph_{\al}$ where both defined.
In particular, one can compute from the definition of $\al$ that if $|z| = j$ then $\vph_{\al} (z) = \vph(z) + 4 \ep$
(having ensured $\al > j$ means that $\vph_{\al}$ is defined for $|z|=j$).

As a function, $H_{\al} \in C^{\infty} ( \D_{\al} \times [0,\infty) )$ thus, in particular, smooth on $\overline{\D_j} \times [0,\tau]$ since $\D_j \subset \subset \D_{\al}.$
Moreover, recalling \eqref{K0}, we see that \eqref{upper_barrier_conf_fac} ensures that $H_{\al} (z , 0) \geq u(z,0)$ throughout $\D_j,$
whilst for $(z,t) \in \partial \D_j \times [0,\tau]$ we may compute, using \eqref{K1}, that
$H_{\al} ( z , t) \geq \vph_{\al} ( z) + \frac{1}{2} \log( 1 + b) = \varphi ( z) + 4\ep + \frac{1}{2}\log ( 1 + b) \geq u ( z , t )$
since $z \in \partial \D_j$ means $|z|=j.$ 

We are now in a position to apply the variant of the comparison principle stated in Theorem \ref{comp princ} to deduce that $ H_{\al} \geq u$ throughout $\overline{ \D_j} \times [0,\tau].$
Since at the origin $0 \in \D_j$ we have $\vph_{\al}(0) = \vph (0),$ we see that at the origin $H_{\al} \geq u$ is equivalent to 
\beq
	\label{upper_barrier_1}
		g(t) \leq \left( 1 + b + \frac{2t}{\al^2} \right) h.
\eeq 
The lower barrier is constructed in a similar fashion. This time we consider a complete Ricci flow $h_{\mu} (t)$ on the disc of radius $\mu = \mu (j) > 1$ with Gaussian curvature initially $-(1-b)^{-1} \mu^{-2}$
where $\mu$ is taken to be $\mu (j) := j \left( 1 - (1-j^2) \exp \left[ \frac{5-4b}{1-b} \ep \right]\right)^{-\frac{1}{2}}.$
For this to make sense we require $1 - (1-j^2) \exp \left[\frac{5-4b}{1-b} \ep \right] > 0,$ which will be the case provided $1- e^{-10 \ep} < j^2.$
From \eqref{euc_dist_ests} we know that $j \geq 1 - \frac{b}{2}e^{-10\ep}$ and so, via Bernoulli's inequality, $j^2 > 1 - be^{-10 \ep}$ which is a little stronger than required.
A straightforward computation shows that $\mu(j) > 1$ as claimed.

The restriction of this flow to $\D_j$ yields a (now incomplete) flow which acts as a lower barrier for our flow $g(t)$ on $\D_j.$
To see this observe that the conformal factor of this flow can be written as
\beq
	\label{lower_barrier_conf_fac}
		H_{\mu} (z,t) := \vph_{\mu} (z) + \frac{1}{2} \log (1-b) + \frac{1}{2} \log \left( 1 + \frac{2t}{(1-b)\mu^2}\right)
\eeq 
where $\vph_{\mu}(z) := \vph \left( \frac{z}{\mu}\right)$ so that $\vph_{\mu} \leq \vph$ where both defined. 
As a function $H_{\mu} \in C^{\infty} ( \D \times [0,\infty))$ and thus, in particular, smooth on $\overline{\D_j} \times [0,\tau].$ 
Moreover, recalling \eqref{K0}, we see that \eqref{lower_barrier_conf_fac} ensures that $ H_{\mu} (z,0) \leq u(z,0)$ throughout $\D_j.$
Further, if $z \in \partial \D_j$ then $|z|=j$ and so $\vph_{\mu} (z) = \vph(z) -4 \ep - \frac{\ep}{1-b}.$
Therefore we may deduce that
\beq
	\label{getting_there}
		\vph_{\mu} (z) + \frac{1}{2} \log \left( 1 + \frac{2t}{(1-b) \mu^2} \right) \leq \vph_{\mu}(z) + \frac{t}{(1-b) \mu^2} \leq \vph_{\mu}(z) + \frac{\ep}{(1-b)} \leq \vph(z) - 4 \ep
\eeq
for all $(z,t) \in \partial \D_j \times [0,\tau]$ where we have used the inequality $\log x \leq x - 1.$
Hence \eqref{K1} and \eqref{getting_there} allows us to conclude that $ H_{\mu} \leq u$ throughout $\partial \D_j \times [0,\tau].$

We are now in a position to apply the variant of the comparison principle stated in Theorem \ref{comp princ} to deduce that $ H_{\mu} \leq u$ throughout $\overline{ \D_j} \times [0,\tau].$
Since at the origin $0 \in \D_j$ we have $\vph_{\mu}(0) = \vph (0),$ we see that at the origin $H_{\mu} \leq u$ is equivalent to 
\beq
	\label{lower_barrier_1}
		\left( 1 - b + \frac{2t}{\mu^2} \right) h \leq g(t).
\eeq 
Combining \eqref{upper_barrier_1} and \eqref{lower_barrier_1} yields that
\beq
	\label{barriers_at_0}
		(1-b) \left( \frac{ 1 + \frac{2t}{(1-b)\mu^2}}{1+2t} \right)h  \leq \frac{g(t)}{1+2t} \leq  (1+b) \left( \frac{ 1 + \frac{2t}{(1+b)\al^2}}{1+2t} \right)h
\eeq
at the origin $0 \in \D$ for all times $t \in [0,\tau].$
The estimates of \eqref{barriers_at_0} yield the barriers required by the claim provided we have both 
\beq
	\label{required_bounds}
		(\bA) \quad \al^2 \geq \frac{1}{1+b} \qquad \text{and} \qquad (\bB) \quad \mu^2 \leq \frac{1}{1-b}.
\eeq
The estimate $(\bA)$ in \eqref{required_bounds} is true provided $$j^2 \geq \frac{ e^{4\ep} - 1} { e^{4\ep} - 1 + be^{4\ep}} = 1 - \frac{b}{1+b - e^{-4\ep}}.$$
From \eqref{euc_dist_ests} we know that $ j \geq 1 - \frac{b}{6}$ and thus $ j^2 \geq 1 - \frac{b}{3}$ via the Bernoulli inequality. 
This is a little stronger than required and hence $(\bA)$ in \eqref{required_bounds} is true.
The estimate $(\bB)$ in \eqref{required_bounds} is true provided 
$$ j^2 \geq \frac{ \exp \left[\frac{5-4b}{1-b} \ep\right] - 1 }{ \exp \left[ \frac{5-4b}{1-b} \ep \right] - 1 + b} = 1 - \frac{b}{\exp \left[ \frac{5-4b}{1-b}\ep\right] -1 +b}. $$
From \eqref{euc_dist_ests} we know that $j \geq 1 - \frac{b}{2}e^{-10\ep}$ and thus $j^2 \geq 1 - b e^{-10 \ep}$ via the Bernoulli inequality. 
This is stronger than required and hence $(\bB)$ in \eqref{required_bounds} is true.
The estimates $(\bA)$ and $(\bB)$ in \eqref{required_bounds} combine with \eqref{barriers_at_0} to yield that
$(1-b)h \leq \frac{g(t)}{1+2t} \leq (1+b)h$
at the origin $0 \in \D$ for all $t \in [0,\tau],$ thus completing the proof of the claim.
\end{claimproof}
\vskip 2pt
\noindent
Combined with suitable M\"{o}bius diffeomorphisms, the claim allows us to establish the desired barriers for the pulled back flow $F^{\ast} g(t)$ on $\ovB_h (0,R-J) \times [0,\tau].$ 
The barriers in \eqref{Barriers_preserved} on $\ovB_{\cH} (x , R - J) \times [0,\tau]$ are then immediate by pulling back via the diffeomorphism $F^{-1}.$
This completes the proof of Lemma \ref{Barriers}.
\end{proof}
\vskip 4pt
\noindent
It is well known that $L^{\infty}$-barriers give rise to uniform 
$C^l$-estimates at strictly positive times. 
The following result uses this to establish Gauss curvature control away from time $0.$

\begin{lemma}[Barriers give curvature control] 
\label{curvature_control}
Let  $\al \in (0,1]$ and $S > 0.$ Then for any $\de \in (0, S)$ there exists a constant $b = b ( S , \al , \de ) > 0$
for which the following is true.

Assume that $(\m , \cH)$ is a smooth surface such that for some $x \in \m$ and $R \geq 2$ we have $\B_\cH (x,R) \subset \subset \m$ and that
$\left( \B_\cH (x,R) , \cH \right)$ is isometric to a hyperbolic disc of radius $R.$
Suppose that $g(t)$ is a smooth Ricci flow on $\m,$
defined for all $t \in [0, T]$ for some $T \in (0 , S],$ with $g(0)$ conformal to $\cH,$ and we have the barriers 
\beq
	\label{L-inf}
		(1-b)\cH \leq \frac{g(t)}{1+2t} \leq (1+b)\cH
\eeq
throughout $\B_\cH (x, R) \times [0,T].$ Then we may conclude that we have the Gauss curvature bounds
\beq 
	\label{Gauss-cont}
		-1 - \al \leq \K_{\frac{g(t)}{1+2t}} \leq -1 + \al
\eeq
throughout $\ovB_\cH ( x , R - 2) \times [\de,T].$ The estimates of \eqref{Gauss-cont} are vacuous if $T < \de.$
\end{lemma}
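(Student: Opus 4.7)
I will argue by contradiction and compactness. Assume the conclusion fails for some $\alpha\in(0,1]$, $S>0$ and $\delta\in(0,S)$. Then there exist sequences $b_n\downarrow 0$, smooth surfaces $(\m_n,\cH_n)$ with distinguished points $x_n$ and radii $R_n\geq 2$, smooth Ricci flows $g_n(t)$ satisfying the barriers \eqref{L-inf} on $\B_{\cH_n}(x_n,R_n)\times[0,T_n]$ (with $T_n\in[\delta,S]$), together with points $y_n\in\ovB_{\cH_n}(x_n,R_n-2)$ and times $t_n\in[\delta,T_n]$ at which $\bigl|\K_{g_n(t_n)/(1+2t_n)}(y_n)+1\bigr|>\alpha$. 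The goal is to extract a smooth limit flow that must equal the hyperbolic flow $(1+2t)h$, yielding a contradiction.

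First I localize everything to a fixed domain. Using the isometry of $\bigl(\B_{\cH_n}(x_n,R_n),\cH_n\bigr)$ with the hyperbolic disc $\bigl(\B_h(0,R_n),h\bigr)$, and then composing with a M\"obius diffeomorphism of $\D$ sending (the pullback of) $y_n$ to the origin, I may assume that each $g_n(t)$ is a smooth Ricci flow on the fixed hyperbolic ball $\B_h(0,2)\subset\D$, satisfying $(1-b_n)h\leq \frac{g_n(t)}{1+2t}\leq(1+b_n)h$ throughout $\B_h(0,2)\times[0,T_n]$, and with
\begin{equation*}
\bigl|\K_{g_n(t_n)/(1+2t_n)}(0)+1\bigr|>\alpha.
\end{equation*}
Writing $g_n=e^{2u_n}|dz|^2$ and $h=e^{2\varphi}|dz|^2$, the barriers give the uniform bound $|u_n-\varphi-\tfrac12\log(1+2t)|\leq \tfrac12|\log(1\pm b_n)|$, in particular a uniform $L^\infty$ bound on $u_n$ over $\B_h(0,2)\times[0,T_n]$ depending only on $S$.

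The main technical step is uniform parabolic regularity at positive times. Each $u_n$ satisfies the quasilinear parabolic equation $\partial_t u_n = e^{-2u_n}\Delta u_n$; under the uniform $L^\infty$ bound on $u_n$ the coefficient $e^{-2u_n}$ is bounded above and below, so the equation is uniformly parabolic with bounded measurable coefficients. Interior parabolic theory (Krylov--Safonov for $C^\alpha$, then Schauder bootstrap) therefore yields, for every $k\in\N$, a uniform $C^k$ bound on $u_n$ on $\overline{\B_h(0,1)}\times[\delta/2,T_n]$ depending only on $k$, $S$ and $\delta$. This is the step I expect to be most delicate, since the estimates must be independent of both $n$ and $b_n$, and rely only on the $L^\infty$ bound on the conformal factor; but it is a standard application once uniform parabolicity is noted.

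Passing to a subsequence, I may assume $T_n\to T_\infty\in[\delta,S]$ and $t_n\to t_\infty\in[\delta,T_\infty]$, and that $u_n$ converges in $C^k_{\mathrm{loc}}$ on $\B_h(0,1)\times(\delta/2,T_\infty]$ to a smooth limit $u_\infty$. Because $b_n\downarrow 0$, the barriers collapse in the limit to the equality $e^{2u_\infty}|dz|^2=(1+2t)h$, so the limiting rescaled metric is exactly $h$ and therefore has Gauss curvature identically $-1$. Since Gauss curvature is a second-order differential expression in $u$, the $C^2$ convergence at $(0,t_\infty)$ gives $\K_{g_n(t_n)/(1+2t_n)}(0)\to -1$, contradicting $\bigl|\K_{g_n(t_n)/(1+2t_n)}(0)+1\bigr|>\alpha$. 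This contradiction produces the required $b=b(S,\alpha,\delta)>0$.
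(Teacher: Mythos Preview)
Your argument is correct, and the reduction to the fixed ball $\B_h(0,2)$ via the isometry and a M\"obius map is exactly how the paper begins as well. The two proofs diverge after the uniform $C^k$ interior parabolic estimates (the paper cites Lady\v{z}enskaja--Solonnikov--Ural'ceva rather than Krylov--Safonov plus Schauder, but the content is the same): the paper proceeds \emph{directly}, interpolating between the uniform $C^l$ bounds and the $C^0$ smallness $O(b)$ of $u-\varphi-\tfrac12\log(1+2t)$ (via an interpolation inequality of Giesen--Topping) to force the second derivatives of the difference to be small, and hence $\K_{g(t)/(1+2t)}$ close to $-1$, once $b$ is chosen small enough. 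Your route replaces this explicit interpolation by a contradiction--compactness argument, letting $b_n\downarrow 0$ and extracting a limit forced to equal $(1+2t)h$. Your version is arguably cleaner and avoids tracking interpolation constants, at the cost of being non-constructive in $b$; the paper's version in principle gives an effective $b(S,\alpha,\delta)$. One small point worth tightening in your write-up: since $t_n$ may accumulate at $T_\infty$, it is simplest to use the uniform $C^k$ bounds on $\overline{\B_h(0,1)}\times[\delta/2,T_n]$ to control $u_n(\cdot,t_n)$ directly (Arzel\`a--Ascoli on the sequence of time-slices), rather than first producing a limiting flow on an interval that may not contain $t_\infty$.
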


\begin{proof}[Proof of Lemma \ref{curvature_control} (sketch)]
Since the assumption \eqref{L-inf} is preserved under the pull back by diffeomorphisms,
we can pull back via the isometry 
$F : ( \B_h (0,R) , h) \to ( \B_\cH (x , R) , \cH )$
to obtain a smooth Ricci flow defined throughout $\B_h (0,R) \times [0,T],$ and satisfying the barriers in \eqref{L-inf} throughout this region of space-time,
and, thanks to $g(0)$ being conformal to $\cH,$ is given by $\omega h$ throughout $\B_h (0,R) \times [0,T]$ for some smooth function
$\omega : \B_h (0, R) \times [0,T] \to \R.$
Establishing the Gauss curvature estimates required in \eqref{Gauss-cont} for the pulled back flow will allow us to instantly deduce the required Gauss curvature
estimates for the flow $g(t)$ itself by pulling back via $F^{-1}.$

Thus we are reduced to needing to establish that if $g(t)$ is a smooth Ricci flow on $\B_h (0, R),$ defined for all $t \in [0,T],$ 
satisfying the barriers of \eqref{L-inf} throughout $\B_h (0,R) \times [0,T],$ and with $g(t) = \omega h$ for a smooth function $w : \B_h (0, R) \times [0,T] \to \R,$
then $g(t)$ satisfies the Gauss curvature estimates of \eqref{Gauss-cont} throughout $B_h (0, R-2) \times [\de, T].$
By utilising M\"{o}bius diffeomorphisms mapping $0$ to arbitrary $w \in \ovB_h ( 0, R -2),$ we may further reduce to only needing to establish the case $R=2.$ 
That is, having the barriers in \eqref{L-inf} throughout $\B_h (0,2) \times [0,T]$ yields the estimates in \eqref{Gauss-cont} at the origin $0$
for all times $t \in [\de , T].$

Whilst we have not yet specified our constant $b > 0,$ we may impose that we will require $b \in (0, 1/2],$ say. Therefore, 
the barriers in \eqref{L-inf} provide $L^{\infty}$-estimates on the conformal factor $u$ (for which $g(t) = e^{2u} |dz|^2$) throughout $\B_h(0,2) \times [0,T]$ depending only on $S.$
Since $g(t)$ is a Ricci flow, recalling \eqref{2dric}, the conformal factor $u$ satisfies the quasi-linear PDE $\frac{\partial u}{\partial t} = e^{-2u} \Delta u.$
A standard application of quasilinear PDE regularity theory (in particular, Theorems V.I.I and IV.10.1 in \cite{Lady}) allows us to deduce $C^l$-estimates, 
with respect to the flat Euclidean metric $|dz|^2,$ over $\D_{1/4},$ for all times $t \in [\de , T],$ depending only on $S,$ $\de$ and $l.$

The required Gauss curvature control in \eqref{Gauss-cont} then follows via interpolation. That is, at any $t \in [ \de , T]$ we have 
$C^l$-estimates on the difference of the conformal factors of $\frac{g(t)}{1+2t}$ and $h,$ with respect to the flat Euclidean metric $|dz|^2,$ over $\D_{1/4}.$
These bounds allows us to interpolate between the $C^l$-estimates and the assumed $C^0$-estimates, using Lemma B.6 in \cite{Giesen3}, for example.
By doing so, we may obtain improved control on the Euclidean derivatives, up to second order, of the difference of the conformal factors at the origin. Lemma B.5 in \cite{Giesen3}
then allows us to control the hyperbolic derivatives, up to second order, of the difference of the conformal factors at the origin.
Directly computing the difference of the Gauss curvatures with respect to the conformal factors allows us to convert these derivative bounds into the required Gauss curvature estimates
of \eqref{Gauss-cont}, provided $b$ is sufficiently small, depending on $S,$ $\al$ and $\de$ only. 
The details of this outline are standard arguments, and may be found in \cite{McL}.
\end{proof}
\vskip 4pt
\noindent
In the case that we assume $g(0) \equiv \cH$ throughout $\m,$ we will
require a minor modification of Lemma \ref{curvature_control} to avoid any time delay before achieving our desired Gauss curvature control.
The result will exploit the uniform initial $C^l$-bounds provided by the initial equality.

\begin{lemma}[No time delay]
\label{no_time_delay}
Let $\al \in (0,1]$ and $S > 0.$ Then there exists a constant $b = b( S , \al ) > 0$ for which the following is true. 

Assume $(\m , \cH)$ is a smooth surface such that for some $x \in \m$ and $R \geq 2$ we have $\B_\cH (x,R) \subset \subset \m$ and 
$( \B_\cH (x,R) , \cH )$ is isometric to a hyperbolic disc of radius $R.$
Suppose $g(t)$ is a smooth Ricci flow on $\m,$ defined for all $t \in [0,T]$ for some $T \in (0,S],$ with $g(0) \equiv \cH$ throughout $\m,$ and
we have the barriers
\beq
	\label{no_delay_barriers}
		(1-b) \cH \leq \frac{g(t)}{1+2t} \leq (1+b) \cH
\eeq 
throughout $\B_\cH (x , R) \times [0,T].$
Then we may deduce that we have the Gauss curvature bounds
\beq
	\label{no_delay_gauss_bounds}
		-1 - \al \leq \K_{\frac{g(t)}{1+2t}} \leq -1 + \al
\eeq
throughout $ \ovB_\cH (x,R-2) \times  [0,T].$
\end{lemma}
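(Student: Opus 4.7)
The plan is to follow the same scheme as in the proof of Lemma \ref{curvature_control}, making only the minor modification needed to eliminate the time delay. As before, the assertion is invariant under pulling back by the isometry $F : (\B_h(0,R), h) \to (\B_\cH(x,R), \cH)$, so it suffices to work with the pulled-back flow $F^{\ast} g(t)$ on $\B_h(0,R)$ equipped with its conformal factor $u$ (so that $F^{\ast}g(t) = e^{2u}|dz|^2$), which satisfies $\partial_t u = e^{-2u}\Delta u$ and $u(\cdot, 0) \equiv \vph$ by the hypothesis $g(0) \equiv \cH$. Exploiting M\"{o}bius invariance of the pointwise difference of conformal factors with $\vph$, I further reduce to establishing the conclusion at the origin $0 \in \D$ under the assumption that the barriers in \eqref{no_delay_barriers} hold on $\B_h(0, 2) \times [0, T]$.

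The crucial new ingredient is that the initial condition $u(\cdot, 0) \equiv \vph$ gives uniform $C^l$ control on $u - \vph$ at time $t = 0$ on any compactly contained sub-disc, with bounds that are absolute (they are zero). Combined with the $L^\infty$ bounds on $u - \vph$ provided by \eqref{no_delay_barriers} (for $b \leq 1/2$, say), this is exactly what is needed to apply the quasilinear parabolic regularity theory of Ladyzhenskaya-Solonnikov-Ural'tseva (Theorems V.I.I and IV.10.1 in \cite{Lady}) \emph{up to the initial time} rather than after an arbitrary delay. Concretely, I would obtain $C^l$ estimates on $u - \vph$ (with respect to $|dz|^2$) over $\D_{1/4} \times [0, T]$ depending only on $S$ and $l$, with no dependence on a time delay $\de$.

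Once these uniform $C^l$ estimates up to $t = 0$ are in hand, the remainder of the argument is identical to that sketched for Lemma \ref{curvature_control}: interpolate (Lemma B.6 in \cite{Giesen3}) between the $C^l$ bounds and the $C^0$ barrier $\|u - \vph - \tfrac{1}{2}\log(1+2t)\|_{\infty} \lesssim b$ that follows from \eqref{no_delay_barriers}, convert the resulting Euclidean derivative bounds at the origin into hyperbolic derivative bounds (Lemma B.5 in \cite{Giesen3}), and express the Gauss curvature of $\frac{g(t)}{1+2t}$ in terms of second derivatives of the difference of conformal factors to deduce \eqref{no_delay_gauss_bounds} at the origin for all $t \in [0, T]$, provided $b = b(S, \al) > 0$ is chosen sufficiently small.

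The only obstacle, and really the only point where the argument differs from Lemma \ref{curvature_control}, is justifying the parabolic regularity up to $t = 0$. This relies essentially on the initial equality $g(0) \equiv \cH$ rather than merely an $L^\infty$ closeness, since only the former supplies initial data smooth enough to propagate $C^l$ control to positive times without a delay. The details are standard parabolic bootstrap arguments of the type worked out in \cite{McL}, so I would simply cite them rather than reproduce them.
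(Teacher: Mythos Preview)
Your proposal is correct and follows essentially the same approach as the paper's own proof sketch: reduce via the isometry and M\"{o}bius transformations to $\B_h(0,2)$, use the initial equality $g(0)\equiv h$ to obtain uniform $C^l$ bounds at $t=0$, invoke the Lady\v{z}enskaja--Solonnikov--Ural'ceva regularity (Theorems V.I.I and IV.10.1 in \cite{Lady}) in the variant that only requires moving away from the spatial boundary to get $C^l$ control on $\D_{1/4}\times[0,T]$, and then proceed exactly as in Lemma~\ref{curvature_control}. The paper likewise refers the detailed bootstrap to \cite{McL}.
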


\begin{proof}[Proof of Lemma \ref{no_time_delay}(sketch)]
The proof is almost identical to the proof of Lemma \ref{curvature_control}. 
The exact same reasoning as in the proof of Lemma \ref{curvature_control} allows us to reduce to working on $\D,$ and only needing to show 
that having the barriers in \eqref{no_delay_barriers} throughout $\B_h (0,2) \times [0,T]$ yields the estimates in \eqref{no_delay_gauss_bounds} at the origin $0.$
However, we now also have, after making the reduction to this case, that $g(0) \equiv h$ throughout $\B_h (0,2).$
Having $g(0) \equiv h$ throughout $\B_h (0,2)$ allows us to deduce uniform initial $C^l$-estimates for the conformal factor $u$ (for which $g(t) = e^{2u} |dz|^2$)
throughout $\B_h(0,2),$ whilst the barriers in \eqref{no_delay_barriers} still provide $L^{\infty}$-bounds throughout $\B_h (0,2) \times [0,T].$
These additional time $0$ uniform $C^l$-estimates allow us to appeal to quasilinear PDE regularity theory. Again we use Theorems V.I.I and IV.10.1 in \cite{Lady}, 
but now the variants that only require moving away from the spatial boundary, and hence yield $C^l$-estimates, with respect to the flat Euclidean metric $|dz|^2,$ over $\D_{1/4},$ for all times $t \in [0,T].$
With these estimates obtained, we proceed verbatim as in the proof of Lemma \ref{curvature_control}.
The details of this outline are again standard arguments, and may be found in \cite{McL}.
\end{proof}

\section{Improved Time Control}
\label{main_results_obtained}

We are now ready to complete the proof of both Theorems \ref{hypat0} 
and \ref{mainresult}. They are both consequences of the following
theorem.

\begin{theorem}
\label{over_arch_thm}
Let $ \al \in (0,1]$ be given. Then there is a universal constant $\ep > 0$ 
such that for any $\de \in (0, \ep)$ there exist constants $b = b( \al , \de) >0$ and $\Lambda = \Lambda (\al ,\de) >0$ 
for which the following is true.

Suppose that $R \geq \Lambda$ and that $(\m , \cH)$ is a smooth surface 
which satisfies for some $x \in \m$ that the ball $\B_\cH (x,R) \subset \subset \m$ and $\left( \B_\cH (x , R) , \cH \right)$ is isometric to a hyperbolic disc of radius $R.$
Assume $g(t)$ is a smooth Ricci flow defined on $\m$ for all $t \in [0,T]$ for some $T > 0,$ with $g(0)$ conformal to $\cH,$ 
and satisfying that for any $l \in \N_0,$ if $z \in \B_\cH (x,R- l \Lambda )$ and $t \in [0,T]$ then
$\B_{g(t)} \left(z,(1+2\ep)^{\frac{l}{2}} \right) \subset \subset \m.$ Further suppose that
\beq
	\label{over_arch_assumed_ests}
		(\bA) \quad (1-b) \cH \leq g(0) \leq (1+b) \cH \qquad \text{and} \qquad (\bB) \quad | \K_{g(0)} | \leq 2
\eeq
throughout $\B_\cH(x,R).$ 
Then we have that 
\beq
	\label{Gauss_curv_bound_conc}
		- 1- \al \leq \K_{\frac{g(t)}{1+2t}} \leq -1 + \al
\eeq
throughout $\ovB_\cH \left(x , R - \left\lfloor \frac{R}{\Lambda} \right\rfloor \Lambda \right) \times [ \de , \Tau_{\max}]$ where
\beq
	\label{time_conc}
		\Tau_{\max} = \min \left\{ T , \frac{\exp \left[  \left\lfloor \frac{R}{\Lambda} \right\rfloor \log (1+2\ep) \right] - 1}{2} \right\}.
\eeq 
Moreover, if in place of the estimates in \eqref{over_arch_assumed_ests} we had that $g(0) \equiv \cH$ throughout $\m,$ 
then we may in fact deduce the estimates of \eqref{Gauss_curv_bound_conc} throughout 
$\ovB_\cH \left(x , R - \left\lfloor \frac{R}{\Lambda} \right\rfloor \Lambda \right) \times [0 , \Tau_{\max}],$ where $\Tau_{\max}$ is as specified in \eqref{time_conc}.
\end{theorem}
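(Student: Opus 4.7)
My plan is to prove Theorem \ref{over_arch_thm} by an induction in $l$ that iterates the pair (Lemma \ref{Barriers}, Lemma \ref{curvature_control}) across a sequence of parabolically rescaled Ricci flows, each iteration pushing the controlled time forward by a factor of $1+2\ep$ at the cost of a fixed spatial shrinkage $\Lambda$. I would first fix $\ep > 0$ as provided by Lemma \ref{Barriers}, take $b := b(\ep, \al, \de)$ from Lemma \ref{curvature_control} applied with $S = \ep$, and set $J := J(b)$ and $\Lambda := J + 2$, so that $b$, $J$ and $\Lambda$ depend only on $\al$ and $\de$. Introducing the times $t_l := \tfrac{1}{2}[(1+2\ep)^l - 1]$, which satisfy $1 + 2 t_{l+1} = (1+2\ep)(1+2t_l)$, and the rescaled flows
\beq
	\hat g_l (s) \;:=\; \frac{1}{1+2 t_l}\, g\!\left( t_l + (1+2 t_l) s \right), \qquad s \in [0 , \ep_l],
\eeq
with $\ep_l := \min\{\ep , ( T - t_l ) / (1 + 2 t_l) \}$, a direct computation confirms that each $\hat g_l$ is a Ricci flow, and that $\hat g_l (s) / (1 + 2s) = g(t) / (1 + 2t)$ whenever $t = t_l + (1+2t_l) s$. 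Hence any barrier or curvature bound on $\hat g_l (s) / (1+2s)$ transfers verbatim to $g(t)/(1+2t)$.

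I would then induct on $l \in \{ 0 , 1 , \ldots , L \}$, where $L := \lfloor R / \Lambda \rfloor$, maintaining on $\ovB_\cH ( x , R - l \Lambda )$ the paired invariants that $(1-b) \cH \leq g(t)/(1+2t) \leq (1+b) \cH$ for $t \in [0, \min \{ t_l , T \}]$ and that $\left| \K_{g(t_l)/(1+2t_l)} \right| \leq 2$ whenever $t_l \leq T$. The base case $l = 0$ is immediate from the hypotheses in \eqref{over_arch_assumed_ests}. For the inductive step the containment condition of the theorem translates, via the identity $\B_{\hat g_l (s)} (z , 1) = \B_{g(t)} ( z , (1 + 2 \ep)^{l/2})$, into precisely the containment hypothesis of Lemma \ref{Barriers} for $\hat g_l$ on $\B_\cH ( x , R - l \Lambda )$; this produces the barrier invariant on $\ovB_\cH ( x , R - l \Lambda - J ) \times [0 , \ep_l]$ for $\hat g_l (s)/(1+2s)$, equivalently for $g(t)/(1+2t)$ on $t \in [0 , \min \{ t_{l+1} , T \}]$. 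A subsequent application of Lemma \ref{curvature_control} to $\hat g_l$ with $S = \ep$ on $\B_\cH ( x , R - l \Lambda - J )$ upgrades these barriers to $\left| \K_{\hat g_l (s)/(1+2s)} \right| \leq 1 + \al \leq 2$ on $\ovB_\cH ( x , R - l \Lambda - J - 2 ) = \ovB_\cH ( x , R - (l+1) \Lambda )$ for $s \in [\de , \ep_l]$; evaluating at $s = \ep$ (when $\ep_l = \ep$) then supplies the curvature invariant at stage $l+1$, closing the induction.

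After $L$ iterations the barrier invariant is valid throughout $\ovB_\cH ( x , R - L \Lambda ) \times [0 , \Tau_{\max}]$ with $\Tau_{\max}$ precisely the quantity in \eqref{time_conc}. The Gauss curvature conclusion \eqref{Gauss_curv_bound_conc} on $[\de , \Tau_{\max}]$ then follows from a final sweep applying Lemma \ref{curvature_control} to each $\hat g_l$, stitching the resulting bounds on each $s$-interval $[\de , \ep]$ into a bound on the full $g$-time interval $[\de , \Tau_{\max}]$; the would-be gaps at the stage endpoints $t_l$ are closed by observing that the regularity produced inside Lemma \ref{curvature_control} at stage $l-1$ gives $\hat g_l (0)$ as smooth initial data for stage $l$, so the argument underlying Lemma \ref{no_time_delay} may be invoked from stage $l = 1$ onwards. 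In the \emph{moreover} case, the hypothesis $g(0) \equiv \cH$ allows one to replace Lemma \ref{curvature_control} by Lemma \ref{no_time_delay} already at stage $l = 0$, delivering curvature control all the way down to $t = 0$.

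The main conceptual obstacle is the uniformity of $b$ in $l$: this succeeds precisely because Lemma \ref{curvature_control} is never applied to $g(t)$ on the long interval $[0 , \Tau_{\max}]$, on which its $S$ parameter would grow exponentially in $R$, but only to the intrinsically short-time rescaled flows $\hat g_l$ with $S = \ep$, giving $b = b(\ep , \al , \de)$ depending only on $\al$ and $\de$. The remaining technical work is the bookkeeping of spatial shrinkage, with the losses of $J$ per Lemma \ref{Barriers} and $2$ per Lemma \ref{curvature_control} absorbed into the single constant $\Lambda = J + 2$; the containment hypothesis of the theorem is tailored so that these losses align stage by stage with the radii appearing in the induction.
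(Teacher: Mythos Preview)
Your iterative scheme is essentially the one in the paper: fix $\ep$ from Lemma~\ref{Barriers}, choose $b$ via Lemma~\ref{curvature_control} with $S=\ep$, set $\Lambda=J(b)+2$, and repeatedly apply Lemma~\ref{Barriers} followed by Lemma~\ref{curvature_control} to the parabolic rescalings $\hat g_l$, each iteration gaining a factor $1+2\ep$ in time at the cost of $\Lambda$ in radius. The induction you describe matches the paper's inductive step (its Claim) and correctly produces the barriers $(1-b)\cH\le g(t)/(1+2t)\le(1+b)\cH$ on $\ovB_\cH(x,R-L\Lambda+2)\times[0,\Tau_{\max}]$. One small omission: for the \emph{moreover} clause you must also bound $b$ by the constant $b_2(\ep,\al)$ of Lemma~\ref{no_time_delay}, so take $b=\min\{b_1,b_2\}$ from the outset.

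The genuine gap is your ``final sweep''. Applying Lemma~\ref{curvature_control} to each $\hat g_l$ only yields curvature control for $s\in[\de,\ep]$, i.e.\ for $g$-times in $[\,t_l+(1+2t_l)\de,\;t_{l+1}\,]$; the intervals $(t_l,\,t_l+(1+2t_l)\de)$ are not covered. Your proposed patch --- invoking ``the argument underlying Lemma~\ref{no_time_delay}'' using the interior regularity hidden inside the proof of Lemma~\ref{curvature_control} --- is not a proof as written: Lemma~\ref{no_time_delay} as stated requires $g(0)\equiv\cH$, not merely smooth initial data, and the $C^l$ estimates you need are in the proofs of Lemmas~\ref{curvature_control} and~\ref{no_time_delay}, not their statements. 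You would have to extract and quantify those estimates uniformly in $l$ and then formulate and prove a new version of Lemma~\ref{no_time_delay} taking $C^l$ initial bounds as hypothesis. The paper avoids all of this with a single clean device: once the barriers hold on $\ovB_\cH(x,R-L\Lambda+2)\times[0,\Tau_{\max}]$, for each $s\in[\de,\Tau_{\max}]$ set $\gamma_s:=(s-\de)/(1+2\de)$ and consider the rescaled flow $g_s(t):=g(\gamma_s+(1+2\gamma_s)t)/(1+2\gamma_s)$ on $t\in[0,\de]$; then $g_s(t)/(1+2t)=g(\tau)/(1+2\tau)$ for $\tau\le s$, so the barriers transfer, and one application of Lemma~\ref{curvature_control} (with the same $S=\ep$ and the same $b$) to $g_s$ gives the curvature bound at $t=\de$, which is exactly $g$-time $s$. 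This hits every $s\in[\de,\Tau_{\max}]$ directly, with no stitching and no gaps; I recommend you adopt it in place of your sweep.
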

\vskip 4pt
\noindent
To clarify, for $z \in \R$ we have 
$\left\lfloor z \right\rfloor := \max \{ m \in \Z : m \leq z \}.$
Before starting the proof we outline the strategy. 
First, we repeatedly apply Lemma \ref{Barriers} followed by Lemma \ref{curvature_control} to deduce that 
$(1-b)\cH \leq \frac{g(t)}{1+2t} \leq (1+b)\cH$ throughout 
$\ovB_\cH \left(x , R - \left\lfloor \frac{R}{\Lambda} \right\rfloor \Lambda + 2 \right) \times [0 , \Tau_{\max}],$
where $\Tau_{\max}$ is as specified in \eqref{time_conc}.
To elaborate, Lemma \ref{Barriers} establishes these barriers 
over some time interval $[0,\tau]$ for $\tau > 0.$
Lemma \ref{curvature_control} then provides Gauss curvature bounds
at time $\tau$ that allow us to apply Lemma \ref{Barriers}
again, but this time using the metric $\frac{g(\tau)}{1+2\tau}$
as the initial metric. We repeatedly apply these lemmas, in
this order, as many times as possible.

Successively applying these lemmas will require considering
suitable rescaled versions of the original flow $g(t)$. 
For example, to make the second application from time $\tau$ 
onwards will require considering a rescaled variant of $g(t)$
that takes $\frac{g(\tau)}{1+2\tau}$ as its initial metric.
A consequence of the rescaling is that each application will
obtain the barriers for $\frac{g(t)}{1+2t}$ on progressively 
longer time intervals.
Together with keeping track of the maximum number of times we 
can iterate this procedure, theses increases in the length of the
successive time intervals will lead to the explicit form of 
$\Tau_{max}$ in \eqref{time_conc}.

Having established the barriers, we will then be able to use 
Lemma \ref{curvature_control} to obtain the Gauss curvature control 
of \eqref{Gauss_curv_bound_conc} throughout
$\ovB_\cH \left(x , R - \left\lfloor \frac{R}{\Lambda} \right\rfloor \Lambda \right) \times [\de , \Tau_{\max}].$
Of course, our iterative procedure in the first step invoked Lemma 
\ref{curvature_control}, and so provided these Gauss curvature bounds
for certain times already. However, each time Lemma \ref{curvature_control}
was used, there was an initial portion of the time interval 
under consideration for which the Gauss curvature bounds are
\emph{not} established. A convenient way of overcoming this 
problem is to directly argue that the barriers established in the
first step give the Gauss curvature control 
of \eqref{Gauss_curv_bound_conc} for all times 
$t \in [\de , \Tau_{max}].$

To achieve this, fix $s \in [\de , \Tau_{max}]$ and consider $g(t)$
for $t \in [s - \de, s]$. We may then use the barriers from the first 
step to apply Lemma \ref{curvature_control} to a suitably rescaled
version of $g(t)$ on this time interval. Consequently, after 
rescaling back to the original flow, we have the Gauss curvature 
bounds in \eqref{Gauss_curv_bound_conc} throughout 
$\ovB_\cH \left(x , R - \left\lfloor \frac{R}{\Lambda} \right\rfloor \Lambda \right)$ at time $t=s$.
Repeating for each $s \in [\de , \Tau_{max}]$ then yields the desired
conclusion.

Finally, for the case that $g(0) \equiv \cH$ we make one further
step. Namely, we additionally apply Lemma \ref{no_time_delay}
to avoid any time delay before achieving the Gauss curvature control
of \eqref{Gauss_curv_bound_conc}.

\begin{proof}[Proof of Theorem \ref{over_arch_thm}]
Retrieve the universal constant $\ep > 0$ from Lemma \ref{Barriers}. Let $\al \in (0,1]$ and $\de \in (0, \ep)$ both be given. 
Retrieve the constant $ b_1 = b_1(\al , \de) > 0$ arising in Lemma \ref{curvature_control} for the $S,$ $\al$ and $\de$ there equal to $\ep,$ $\al$ and $\de$ here respectively.
With the aim of avoiding any time delay before obtaining the estimates of \eqref{Gauss_curv_bound_conc} in the case $g(0) \equiv \cH,$
retrieve the constant $b_2 = b_2 ( \al ) > 0$ arising in Lemma \ref{no_time_delay} for the $S$ and $\al$ there given by $\ep$ and $\al$ here respectively.
Take $b := \min \{ b_1 , b_2 \} > 0$ which depends only on $\al$ and $\de.$ By reducing $b$ if required, but without additional dependency, we may assume that $b \in (0, 1/2].$
This means we may define $\Lambda = \Lambda ( \al , \de ) := J(b) + 2 > 0$ where $J(b)$ is the constant arising in Lemma \ref{Barriers}.
We fix these quantities for the remainder of the proof.

We first deal with the case $T \in (0, \ep].$ That is, assume we are in the setting of the theorem with $T \in (0, \ep].$
The estimates on $g(0)$ in \eqref{over_arch_assumed_ests}, together with the assumed compact inclusions for $l=0$ and that $g(0)$ is conformal to $\cH,$
provide the required hypotheses to apply Lemma \ref{Barriers} to the flow $g(t).$
Doing so, and recalling that $\tau := \min \{ T , \ep \} = T \leq \ep,$ yields the barriers $(1-b)\cH \leq \frac{g(t)}{1+2t} \leq (1+b) \cH$
throughout $\ovB_\cH (x , R - \Lambda + 2 ) \times [0, T],$ recalling that $\Lambda = J(b) + 2 > 0$ where $J(b)$ is the constant arising in Lemma \ref{Barriers}.

In turn, these barriers are of the form required by Lemma \ref{curvature_control}. Recalling how $b$ was specified, we observe that we have the required hypothesis to apply
Lemma \ref{curvature_control} to $g(t)$ and deduce that $-1 - \al \leq \K_{\frac{g(t)}{1+2t}} \leq -1 + \al$ throughout
$\ovB_\cH (x , R - \Lambda) \times [\de , T].$ Of course, these Gauss curvature estimates are vacuous if $T < \de.$  
Since $R \geq \Lambda$ we see that $ \left\lfloor \frac{R}{\Lambda} \right\rfloor \geq 1,$ and so we have established the Gauss curvature estimates required in 
\eqref{Gauss_curv_bound_conc} throughout $\ovB_\cH \left( x , R - \left\lfloor \frac{R}{\Lambda} \right\rfloor \Lambda \right) \times [\de, T],$
which is for the time required in \eqref{time_conc}.

In the case that the estimates in \eqref{over_arch_assumed_ests} are replaced by the assumption that $g(0) \equiv \cH$ throughout $\m$ 
we may appeal to Lemma \ref{no_time_delay} in place of Lemma \ref{curvature_control}.
By doing so, we conclude that $-1 - \al \leq \K_{\frac{g(t)}{1+2t}} \leq -1 + \al$ throughout $\ovB_\cH (x , R - \Lambda) \times [0 , T].$ 
Again $R \geq \Lambda$ means that $ \left\lfloor \frac{R}{\Lambda} \right\rfloor \geq 1,$ and so we have established the Gauss curvature estimates required in 
\eqref{Gauss_curv_bound_conc} throughout $\ovB_\cH \left( x , R - \left\lfloor \frac{R}{\Lambda} \right\rfloor \Lambda \right) \times [0, T],$
giving the required improvement.

For the remainder of the proof we assume that $T > \ep.$ We proceed under the assumptions that $g(0)$ satisfies both the estimates specified in \eqref{over_arch_assumed_ests},
and will only later make a single extra step to remove the time delay before we obtain the estimates in \eqref{Gauss_curv_bound_conc} when we have the initial equality $g(0) \equiv \cH.$
Our first goal is to establish that the flow $g(t)$ satisfies the barriers $(1-b) \cH \leq \frac{g(t)}{1+2t} \leq (1+b) \cH$ throughout 
$\ovB_\cH \left(x , R - \left\lfloor \frac{R}{\Lambda} \right\rfloor \Lambda + 2 \right) \times [0 , \Tau_{\max}],$ where $\Tau_{\max}$ is as specified in \eqref{time_conc}.
To achieve this, we will inductively apply Lemma \ref{Barriers} followed by Lemma \ref{curvature_control} to rescalings of $g(t).$

To illustrate, note we have the required hypotheses to appeal to Lemma \ref{Barriers} and deduce, since $\min \{ T , \ep \} =\ep$ now,
that we have the barriers $(1-b)\cH \leq \frac{g(t)}{1+2t} \leq (1+b)\cH$ throughout $\ovB_\cH ( x , R - \Lambda + 2) \times [0 , \ep].$
These barriers allow us to apply Lemma \ref{curvature_control} to the flow $g(t)$ to obtain that 
$-1 - \al \leq \K_{\frac{g(t)}{1+2t}} \leq -1 + \al$ throughout $\ovB_\cH ( x , R - \Lambda) \times [ \de , \ep].$ 
Since $\al \in (0,1],$ these Gauss curvature estimates tell us that $\left| \K_{\frac{g(\ep)}{1+2\ep}} \right| \leq 2$
throughout $\ovB_\cH (x , R - \Lambda).$
Therefore the metric $\frac{g(\ep)}{1+2\ep}$ satisfies the same barriers and Gauss curvature bounds throughout $\B_\cH ( x , R - \Lambda)$
as those satisfied by $g(0)$ throughout $\B_\cH (x,R).$
Hence it is natural to try to apply Lemma \ref{Barriers} to a rescaling of the flow $g(t)$ which takes $\frac{g(\ep)}{1+2\ep}$ as its initial metric. 

The rescaled Ricci flow $\tilde{g}(s)$ given by $\tilde{g}(s) := \frac{g( \ep + (1+2\ep)s)}{1+2\ep},$ defined on $\m$ for all $s \in \left[ 0 , \frac{T-\ep}{1+2\ep} \right],$
satisfies that $\tilde{g}(0) = \frac{g(\ep)}{1+2\ep}$ as required. Thus it is to this flow that we aim to apply first Lemma \ref{Barriers}, and then Lemma \ref{curvature_control}. 
Modulo checking that all of the required hypotheses are satisfied (which we will later do rigorously),  
the relationship between $\ep$ and $\frac{T-\ep}{1+2\ep}$ will determine whether this subsequent application of Lemmas \ref{Barriers} and \ref{curvature_control} establishes control
up until time $T,$ or if the flow $\tilde{g}(s)$ exists beyond $s = \ep,$ which itself corresponds to having $T > \ep + (1+2\ep)\ep.$

We also need to consider how the spatial region is changing. Each time we appeal to Lemma \ref{Barriers}, followed by Lemma \ref{curvature_control},
we require being able to move in to a spatial $\cH$ ball, centred at $x,$ of radius $\Lambda$ less than the original radius.
Therefore we can only make this application of Lemma \ref{Barriers}, followed by Lemma \ref{curvature_control}, to the flow $\tilde{g}(s)$ if we have that $R- \Lambda \geq \Lambda,$ i.e. if $R - 2 \Lambda \geq 0.$
If both $T > \ep + (1+2\ep)\ep$ and $R - 2\Lambda \geq 0$ are true, we could apply the lemmas as specified above to control the Ricci flow $\tilde{g}(s)$ up until $s = \ep.$
The aim would then be to repeat this procedure by considering a rescaling of $\tilde{g}(s)$ taking $\frac{\tilde{g}(\ep)}{1+2\ep}$ as its initial metric.

In order to implement this iterative process we introduce some notation. We define
$q \in \N_0$ to be the value 
\beq
	\label{cutie}
		q := \max \left\{ l \in \N_0 : \sum_{k=0}^{l} \ep (1+2\ep)^k \leq T \right\},
\eeq
which is possible since we are assuming $T > \ep.$
Let $N := \min \left\{ q , \left\lfloor \frac{R}{\Lambda} \right\rfloor - 1 \right\}.$ 
We will later see that $N+1$ corresponds to the maximum number of times we may iteratively appeal first to Lemma \ref{Barriers}, followed by Lemma \ref{curvature_control},
to establish the required barriers over a time interval of size $\ep,$ and the Gauss curvature control at the later time $\ep.$
For now, we observe that we necessarily have that $R - (N+1) \Lambda \geq 0,$ hence $R - i \Lambda \geq 0$ for every $i \in \{ 1 , \ldots , N +1 \}.$

For notational convenience we set $g_0 (t) := g(t)$ for $t \in [0, \ep].$ 
and recall that we have established that $(1-b)\cH \leq \frac{g_0(t)}{1+2t} \leq (1+b)\cH$ throughout $\ovB_\cH ( x , R - \Lambda + 2) \times [0,\ep]$
and that $\left| \K_{\frac{g_0(\ep)}{1+2\ep}} \right| \leq 2$ throughout $\ovB_\cH ( x, R - \Lambda).$

For $i \in \{ 1 , \ldots , N + 1\}$ we define   
\beq
	\label{tau_l}
		\tau_i := \frac{T - \sum_{k=0}^{i-1} \ep ( 1 + 2\ep)^k }{ (1+2\ep)^i}
\eeq
which will correspond to the (rescaled) remaining existence time for the flow $g(t)$ after having made $i$ applications of Lemmas \ref{Barriers} and \ref{curvature_control}. 
Naturally this means that $\tau_i >\tau_{i+1}$ when both are defined, and further we claim that $\tau_i \geq \ep$ for every $i \in \{ 1 , \ldots , N \}.$
To see this observe that $q \geq N,$ and hence from \eqref{cutie} we know that $T \geq \sum_{k=0}^q \ep(1+2\ep)^k \geq \sum_{k=0}^N \ep (1+2\ep)^k.$
Therefore, if $i \in \{ 1 , \ldots , N \},$ we can compute, using \eqref{tau_l}, that
$$ \tau_i := \frac{T - \sum_{k=0}^{i-1} \ep ( 1 + 2\ep)^k }{ (1+2\ep)^i} \geq \frac{\sum_{k=0}^N \ep (1+2\ep)^k - \sum_{k=0}^{N-1} \ep ( 1 + 2\ep)^k }{ (1+2\ep)^N} = \ep$$
as required.
For $i \in \{ 1 , \ldots , N  \}$ we inductively define 
\beq
	\label{g_i}
		g_i (t) := \frac{g_{i-1}( \ep + (1+2\ep)t)}{1+2\ep}
\eeq
which is a smooth Ricci flow defined on $\m$ for all $t \in [0, \tau_i].$
Previously, we have seen that $g_1(t)$ is defined on $\m$ for all $t \in [0,\tau_1].$
Then observe, for $i \in \{1 , \ldots , N-1\},$ that if $g_i(t)$ is defined on $\m$ for all $[0,\tau_i],$ then from \eqref{g_i} we see that $g_{i+1}(t)$ 
is defined on $\m$ for all $t \in [ 0 , t_{\ast}]$ where $t_{\ast}$ satisfies that $\ep + (1+2\ep)t_{\ast} = \tau_i.$
Hence $t_{\ast} = \frac{ \tau_i - \ep}{1+2\ep} = \tau_{i+1}$ as required.

Our assumption that for any $z \in \B_\cH ( x , R - i \Lambda )$ and all $t \in [0,T]$ that we have $\B_{g(t)} \left( z , (1+2\ep)^{\frac{i}{2}} \right) \subset \subset \m$ 
tells us that for any $z \in \B_\cH (x, R-i\Lambda )$ and all $t \in [  0 , \tau_i ]$ we have 
\beq
	\label{cmpct_reqs}
		\B_{g_{i}(t)} ( z , 1) = \B_{g \left( \sum_{k=0}^{i-1} \ep (1+2\ep)^k  + (1+2\ep)^{i} t \right)} \left( z , (1+2\ep)^{\frac{i}{2}} \right) \subset \subset \m.
\eeq
Recall that we have established both that $(1-b)\cH \leq \frac{g_0(\ep)}{1+2\ep} \leq (1+b)\cH$ and $\left| \K_{\frac{g_0(\ep)}{1+2\ep}} \right| \leq 2$  
throughout $\ovB_\cH ( x, R - \Lambda).$
In terms of $g_1(t),$ these give that $(1-b)\cH \leq g_1(0) \leq (1+b)\cH$ and $| \K_{g_1(0)} | \leq 2$ throughout $\ovB_\cH ( x, R - \Lambda).$ 
These estimates, together with the compact inclusions in \eqref{cmpct_reqs} (for $i=1$), provide the required hypotheses to apply Lemma \ref{Barriers} to the flow $g_1(t).$ 

In fact, we may proceed inductively, with the following claim giving the inductive step. 
\begin{claim}[Inductive step]
Suppose $i \in \{ 1 , \ldots , N \}$ and we have both $(1-b)\cH \leq g_i(0) \leq (1+b)\cH$ and $| \K_{g_i(0)} | \leq 2$ throughout $\B_\cH ( x , R - i\Lambda ).$
Then we have that
\beq
	\label{i_barriers}
		(1-b) \cH \leq \frac{g_i(t)}{1+2t} \leq (1+b) \cH
\eeq
throughout $\ovB_\cH ( x , R- (i+1) \Lambda + 2) \times [0, \ep],$ and
\beq
	\label{i_gauss}
		-1 - \al \leq \K_{\frac{g_i(t)}{1+2t}} \leq -1 + \al
\eeq
throughout $\ovB_\cH ( x , R- (i+1) \Lambda) \times [\de, \ep].$
Since $\al \in (0,1],$ a particular consequence of \eqref{i_gauss} is that we have $\left| \K_{\frac{g_i(\ep)}{1+2\ep}} \right| \leq 2$ throughout $\ovB_\cH ( x , R - (i+1)\Lambda).$
\end{claim}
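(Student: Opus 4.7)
The plan is to apply Lemma~\ref{Barriers} to $g_i(t)$ on the ball $\B_\cH(x, R - i\Lambda)$ to obtain \eqref{i_barriers}, and then feed the resulting barriers into Lemma~\ref{curvature_control} to obtain \eqref{i_gauss}, exactly mirroring what has already been done in the case $i = 0$ for $g_0(t) = g(t)$ on $\B_\cH(x, R)$.

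First I would verify the hypotheses of Lemma~\ref{Barriers} for $g_i(t)$. The assumed estimates $(1-b)\cH \leq g_i(0) \leq (1+b)\cH$ and $|\K_{g_i(0)}| \leq 2$ on $\B_\cH(x, R - i\Lambda)$ are precisely \eqref{AB}. Since $g(0)$ is conformal to $\cH$ and this property is preserved under Ricci flow and positive rescaling, $g_i(0)$ is conformal to $\cH$. The compact-ball inclusions $\B_{g_i(t)}(z, 1) \subset\subset \m$ for $z \in \B_\cH(x, R - i\Lambda)$ and $t \in [0, \tau_i]$ are exactly what has been recorded in \eqref{cmpct_reqs}, having been arranged in advance so that the rescaling factor $(1+2\ep)^{i/2}$ in the theorem hypothesis is absorbed correctly. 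Finally, $i \leq N \leq \lfloor R/\Lambda \rfloor - 1$ combined with $\Lambda = J(b) + 2$ gives $R - i\Lambda \geq \Lambda > J(b)$, which is the radius lower bound Lemma~\ref{Barriers} requires. Since I have already shown that $\tau_i \geq \ep$ for every $i \in \{1, \ldots, N\}$, the $\tau$ produced by Lemma~\ref{Barriers} equals $\ep$, and the conclusion is precisely \eqref{i_barriers} on $\ovB_\cH(x, R - i\Lambda - J(b)) \times [0, \ep] = \ovB_\cH(x, R - (i+1)\Lambda + 2) \times [0, \ep]$.

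Next I would apply Lemma~\ref{curvature_control} to $g_i(t)$ with parameters $S = \ep$, together with the given $\al$ and $\de$, on the initial ball of radius $R - (i+1)\Lambda + 2 \geq 2$, where the lower bound $\geq 2$ uses $(i+1)\Lambda \leq R$ since $i+1 \leq \lfloor R/\Lambda \rfloor$. Because $b \leq b_1(\ep, \al, \de)$ by construction, the barriers just produced satisfy the hypotheses of that lemma, and the output is the Gauss curvature bound \eqref{i_gauss} on $\ovB_\cH(x, R - (i+1)\Lambda) \times [\de, \ep]$. The final consequence $|\K_{g_i(\ep)/(1+2\ep)}| \leq 2$ on $\ovB_\cH(x, R - (i+1)\Lambda)$ is then immediate from $\al \leq 1$.

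The inductive step is essentially bookkeeping; the only arithmetic requiring care is ensuring that the sequence of radii $R - i\Lambda$ stays above $J(b)$ and that the twice-shrunk ball of radius $R - (i+1)\Lambda$ remains nonempty, both of which have been engineered in advance via the definitions $\Lambda := J(b) + 2$ and $N := \min\{q, \lfloor R/\Lambda \rfloor - 1\}$. No new obstacle appears beyond what had to be handled in the base case $i = 0$.
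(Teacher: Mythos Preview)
Your proposal is correct and follows essentially the same approach as the paper: apply Lemma~\ref{Barriers} to $g_i(t)$ on $\B_\cH(x,R-i\Lambda)$ using the assumed estimates, conformality, the compact inclusions \eqref{cmpct_reqs}, and $\tau_i\geq\ep$, then feed the resulting barriers into Lemma~\ref{curvature_control} with the radius check $R-(i+1)\Lambda+2\geq 2$. Your verification of the radius inequalities via $i\leq N\leq \lfloor R/\Lambda\rfloor-1$ and $\Lambda=J(b)+2$ is slightly more explicit than the paper's, but the argument is the same.
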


\begin{claimproof}
The assumptions in the claim, combined with the compact inclusions of \eqref{cmpct_reqs} for $i,$ along with noting that $g_i(0)$ is conformal to $\cH,$
provide the required hypothesis to apply Lemma \ref{Barriers} to the flow $g_i(t).$ Since $\tau_i \geq \ep$ we can deduce the barriers in \eqref{i_barriers}
over $\ovB_\cH ( x , R - (i+1)\Lambda + 2) \times [0,\ep]$ as required.
The barriers in \eqref{i_barriers}, along with noting that $0 < \de < \ep \leq \tau_i$ and $R - (i+1)\Lambda + 2 \geq 2,$ allow us to appeal to Lemma \ref{curvature_control} to deduce 
the Gauss curvature estimates \eqref{i_gauss} throughout $\ovB_\cH ( x , R- (i+1) \Lambda) \times [\de, \ep]$ as claimed.
\end{claimproof}
\vskip 4pt
\noindent
By appealing to the inductive step in the claim a total of $N$ times, observing that the conclusions of the claim for $i \in \{ 1 , \ldots , N-1\}$ provide the required hypothesis
in order to appeal to the claim for $i+1,$ we can deduce the barriers in \eqref{i_barriers} for every $i \in \{ 1 , \ldots , N \},$
along with already having established such barriers for $i=0.$
Recalling \eqref{g_i}, we can compute that for $i \in \{ 1 , \ldots , N\}$ and $s \in [0,\ep]$ we have
\beq
	\label{i-version}
		\frac{g_i(s)}{1+2s} = \frac{g\left( \sum_{k=0}^{i-1}\ep(1+2\ep)^k + (1+2\ep)^i s\right)}{(1+2\ep)^i(1+2s)} = \frac{g\left( \sum_{k=0}^{i-1}\ep(1+2\ep)^k + (1+2\ep)^i s\right)}{1  + 2 \left( \sum_{k=0}^{i-1}\ep(1+2\ep)^k + (1+2\ep)^i s\right)  }
\eeq
where we have used that $1 + 2 \ep \sum_{k=0}^{i-1} (1+2\ep)^k = (1+2\ep)^i.$
Thus \eqref{i_barriers} tells us that 
\beq
	\label{i-barriers-for-g}
		(1-b)\cH \leq \frac{g(t)}{1+2t} \leq (1+b)\cH
\eeq
throughout $\ovB_\cH ( x , R - (i+1) \Lambda + 2) \times \left[ \sum_{k=0}^{i-1}\ep(1+2\ep)^k , \sum_{k=0}^i \ep (1+2\ep)^k \right].$
Combining \eqref{i-barriers-for-g} for each $i \in \{ 1 , \ldots , N\},$ 
and recalling that we already know that $(1-b)\cH \leq \frac{g(t)}{1+2t} \leq (1+b)\cH$ throughout $\ovB_\cH (x, R - \Lambda + 2) \times [0,\ep],$ yields that 
\beq
	\label{combined_barriers}
		(1+b)\cH \leq \frac{g(t)}{1+2t} \leq (1+b)\cH
\eeq
throughout $\ovB_\cH (x , R - (N+1)  \Lambda +2) \times \left[0 , \sum_{k=0}^{N} \ep (1+2\ep)^k \right].$

We must now split into two cases depending on the value taken by $N.$ 
If $N = \left\lfloor \frac{R}{\Lambda} \right\rfloor - 1$ then we do not have sufficient spatial room left to appeal to the claim.
In this case we can compute that
$$ \sum_{k=0}^{N} \ep (1+2\ep)^k = \frac{1}{2} \left( \exp \left[ (N+1) \log (1+2\ep) \right] -1 \right), $$
and since $N =  \left\lfloor \frac{R}{\Lambda} \right\rfloor - 1$ we see that this gives the form of $\Tau_{max}$
as claimed in \eqref{time_conc}.
Hence we have established the barriers of \eqref{combined_barriers} throughout 
$\ovB_\cH \left( x , R - \left\lfloor \frac{R}{\Lambda} \right\rfloor \Lambda + 2\right) \times [ 0 , \Tau_{\max}].$

If $N < \left\lfloor \frac{R}{\Lambda} \right\rfloor - 1$ then we still have the spatial room required to appeal to the claim.
However, in this case we necessarily have that $N = q$ and so $\tau_{N+1} < \ep,$ hence we can only establish control up to time $\tau_{N+1}.$
Indeed, consider the rescaled Ricci flow 
\beq
	\label{g_N}
		g_{N+1} (t) := \frac{g_{N}( \ep + (1+2\ep)t)}{1+2\ep}
\eeq
defined on $\m$ for all $t \in [0, \tau_{N+1}],$ where $g_{N}(t)$ is as defined in \eqref{g_i} for $i = N.$

Since we were able to apply the inductive step, as stated in the previous claim, to the flow $g_N(t),$
we know that we have both $(1-b)\cH \leq \frac{g_N(\ep)}{1+2\ep} \leq (1+b)\cH$ and $\left| \K_{\frac{g_N(\ep)}{1+2\ep} } \right| \leq 2$
throughout $\ovB_\cH (x , R - (N+1)\Lambda).$
Therefore, from \eqref{g_N} we see that these estimates tell us that we have both 
$(1-b)\cH \leq g_{N+1}(0) \leq (1+b)\cH$ and $\left| \K_{g_{N+1}(0) } \right| \leq 2$
throughout $\ovB_\cH (x , R - (N+1)\Lambda).$
Hence the compact inclusions in \eqref{cmpct_reqs} for $i=N+1,$ and the fact that $g_{N+1}(0)$ is conformal to $\cH,$ combine with the above estimates
to provide the required hypotheses to apply Lemma \ref{Barriers} to the flow $g_{N+1}(t).$
Doing so yields, recalling that $\tau_{N+1} < \ep,$ that 
\beq
	\label{final_step_barriers}
		(1-b) \cH \leq \frac{g_{N+1}(t)}{1+2t} \leq (1+b)\cH
\eeq
throughout $\ovB_\cH ( x , R - (N+2) \Lambda + 2) \times [0, \tau_{N+1}].$
Repeating the computations in \eqref{i-version} and \eqref{i-barriers-for-g} for $i=N+1$ we see that \eqref{final_step_barriers} yields that
\beq
	\label{final_step_barriers_for_g}
		(1-b) \cH \leq \frac{g(t)}{1+2t} \leq (1+b)\cH
\eeq
throughout $\ovB_\cH ( x , R - (N+2)\Lambda + 2  ) \times \left[ \sum_{k=0}^{N} \ep (1+2\ep)^k , \sum_{k=0}^{N} \ep (1+2\ep)^k + (1+2\ep)^{N+1} \tau_{N+1} \right].$
From \eqref{tau_l} we can compute that
$$ \sum_{k=0}^{N} \ep (1+2\ep)^k + (1+2\ep)^{N+1} \tau_{N+1} = T,$$
and since $N < \left\lfloor \frac{R}{\Lambda} \right\rfloor -1$ we must have that $R - (N+2)\Lambda +2 \geq R-  \left\lfloor \frac{R}{\Lambda} \right\rfloor \Lambda +2.$
These observations allow us to combine \eqref{combined_barriers} with \eqref{final_step_barriers_for_g} to deduce that
$	(1-b) \cH \leq \frac{g(t)}{1+2t} \leq (1+b)\cH $
throughout $\ovB_\cH \left( x , R-  \left\lfloor \frac{R}{\Lambda} \right\rfloor \Lambda +2 \right) \times [ 0 , T].$
Since $\Tau_{\max} \leq T,$ we have these barriers for all times $t \in [ 0 , \Tau_{\max}].$

In either case we have established that
\beq
	\label{good_barriers}
		(1-b)\cH \leq \frac{g(t)}{1+2t} \leq (1+b)\cH
\eeq
throughout $\ovB_\cH \left( x , R-  \left\lfloor \frac{R}{\Lambda} \right\rfloor \Lambda +2 \right) \times [ 0 , \Tau_{\max}].$
We will now use these barriers and Lemma \ref{curvature_control} to establish the Gauss curvature estimates required in \eqref{Gauss_curv_bound_conc}
throughout $\ovB_\cH \left( x , R - \left\lfloor \frac{R}{\Lambda} \right\rfloor \Lambda \right) \times [\de, \Tau_{\max}].$
Consider any $ s \in [ \de , \Tau_{\max}]$ 
and define $\gamma_s := \frac{s - \de} { 1 + 2\de} \in [0 , s).$
Then consider the Ricci flow $g_s (t) := \frac{g( \gamma_s + (1+2\gamma_s)t)}{1+2\gamma_s}$ on $\m,$ 
defined for all times $t \in \left[0, \frac{\Tau_{\max} - \gamma_s }{1+2\gamma_s}\right],$
and with $g_s (0)$ conformal to $\cH.$ 
Observe that 
\begin{align*} 
	\frac{\Tau_{\max} - \gamma_s }{1+2\gamma_s} - \de &= \frac{ \Tau_{\max} - \gamma_s - \de - 2 \gamma_s \de}{1+2 \gamma_s} \\
			&= \frac{ (1+2\de)\Tau_{\max} - (s-\de) - (1+2\de)\de - 2 (s-\de) \de}{(1+2 \gamma_s)(1+2\de)} \\
			&= \frac{1+2\de}{1+2s} ( \Tau_{\max} - s ) \geq 0
\end{align*}
where we have used that $1 + 2 \gamma_s = \frac{1+2s}{1+2\de}.$ 
Hence the flow $g_s(t)$ is defined, at least, up to time $\de,$ and we restrict to only considering $g_s(t)$ for times $t \in [0,\de].$
A computation yields that for $t \in [0,\de]$
\beq
	\label{forms_match}
		\frac{g_s(t)}{1+2t} = \frac{g( \gamma_s + (1+2\gamma_s)t)}{(1+2t)(1+2\gamma_s)} = \frac{g( \gamma_s + (1+2\gamma_s)t)}{1+2( \gamma_s +(1+2\gamma_s)t)}
\eeq
where $\gamma_s + (1 + 2 \gamma_s)t \leq \gamma_s + (1+2\gamma_s)\de = s \leq \Tau_{\max}.$
Therefore \eqref{good_barriers} tells us that $(1-b)\cH \leq \frac{g_s(t)}{1+2t} \leq (1+b)\cH$
throughout $\B_\cH \left( x , R-  \left\lfloor \frac{R}{\Lambda} \right\rfloor \Lambda +2 \right) \times [0, \de].$
Further, $\B _\cH \left( x , R-  \left\lfloor \frac{R}{\Lambda} \right\rfloor \Lambda +2 \right) \subset \B_\cH (x , R) \subset \subset \m$ by assumption.
Clearly $R-  \left\lfloor \frac{R}{\Lambda} \right\rfloor \Lambda +2 \geq 2$ and hence, recalling how $b$ was specified at the start of the proof, 
we may apply Lemma \ref{curvature_control} to the flow $g_s(t)$ to obtain that $-1 - \al \leq \K_{\frac{g_s(\de)}{1+2\de}} \leq -1 + \al$
throughout $\ovB_\cH \left( x , R-  \left\lfloor \frac{R}{\Lambda} \right\rfloor \Lambda \right).$
Using \eqref{forms_match} for $t=\de$ yields that $ \frac{g_s(\de)}{1+2\de} = \frac{ g(s)}{1+2s},$ 
and so the Gauss curvature control for $\frac{g_s(\de)}{1+2\de}$ tells us that $-1 - \al \leq \K_{\frac{g(s)}{1+2s}} \leq -1 + \al$
throughout $\ovB_\cH \left( x , R-  \left\lfloor \frac{R}{\Lambda} \right\rfloor \Lambda \right).$
Repeating for all $s \in [\de , \Tau_{\max}]$ allows us to conclude that $-1 - \al \leq \K_{\frac{g(s)}{1+2s}} \leq -1 + \al$
throughout $\ovB_\cH \left( x , R-  \left\lfloor \frac{R}{\Lambda} \right\rfloor \Lambda \right) \times [\de, \Tau_{\max}],$
as required in\eqref{Gauss_curv_bound_conc}.

If we are only assuming both the estimates in \eqref{over_arch_assumed_ests} for $g(0)$ throughout $\B_\cH (x,R)$ we stop here and are done.
If instead we are assuming $g(0) \equiv \cH$ throughout $\m,$ we make a final additional step to avoid any time delay before obtaining 
the Gauss curvature control claimed in \eqref{Gauss_curv_bound_conc}.
Indeed, we have that $(1-b)\cH \leq \frac{g(t)}{1+2t} \leq (1+b)\cH$ throughout $\B_\cH \left( x , R - \left\lfloor \frac{R}{\Lambda} \right\rfloor \Lambda + 2 \right) \times [0, \ep],$ 
and additionally we have $g(0) \equiv \cH$ throughout $\m$ by assumption. 
Recalling how $b$ was specified at the start of the proof, and noting that $R - \left\lfloor \frac{R}{\Lambda} \right\rfloor \Lambda + 2 \geq 2,$ 
we may appeal to Lemma \ref{no_time_delay} to conclude that $-1 - \al \leq \K_{\frac{g(t)}{1+2t}} \leq -1 + \al$ throughout
$\ovB_\cH \left(x , R - \left\lfloor \frac{R}{\Lambda} \right\rfloor \Lambda \right) \times [0, \ep].$
Combined with our previous Gauss curvature estimates, 
we obtain the Gauss curvature estimates in \eqref{Gauss_curv_bound_conc}
for all times $t \in [0, \Tau_{\max}],$ i.e. we have removed the time delay as required.
This completes the proof of Theorem \ref{over_arch_thm}.
\end{proof}

\begin{proof}[Proof of Theorem \ref{mainresult}]
Retrieve the universal constant $\ep > 0$ arising in Theorem \ref{over_arch_thm}. Let $\al \in (0,1]$ and $\de \in (0,\ep).$
Take $\Lambda = \Lambda ( \al , \de ) > 0$ and $b = b ( \al , \de ) > 0$ to be the respective constants arising in Theorem \ref{over_arch_thm}.
We may now define 
\beq
	\label{def_c}
		c = c(\al , \de ) := \frac{1}{4 \Lambda} \log (1+2\ep) > 0
\eeq
and 
\beq
	\label{radius_cR}
		\cR = \cR ( \al , \de ) := \max \left\{ \left( 1 + \frac{2}{\log(1+2\ep)} \right) \Lambda , 4\Lambda \frac{ \log ( 2 \sqrt{1+2\ep})}{\log (1+2\ep)} \right\} \geq \Lambda > 0.
\eeq
Now assume that $R \geq \cR$ and $(\m , \cH)$ is a smooth surface 
which satisfies that, for some $x \in \m,$ the ball $\B_\cH (x,R) \subset \subset \m$ and $\left( \B_\cH (x , R) , \cH \right)$ is isometric to a hyperbolic disc of radius $R.$ 
Suppose $g(t)$ is a complete smooth Ricci flow on $\m,$ defined for all $t \in [0,T]$ for some $T > 0,$ with $g(0)$ conformal to $\cH,$ and satisfying that
$(1-b) \cH \leq g(0) \leq (1+b) \cH$ and $| \K_{g(0)} | \leq 2$ throughout $\B_\cH (x,R).$ 
From \eqref{radius_cR} we have that $R \geq \cR \geq \Lambda.$ 
Therefore we may appeal to Theorem \ref{over_arch_thm} to obtain,
recalling \eqref{Gauss_curv_bound_conc} and \eqref{time_conc}, that at the point $x \in \m$ we have
$-1 - \al \leq \K_{\frac{g(t)}{1+2t}}(x) \leq -1 + \al$ for all times $\de \leq t \leq \tilde{\Tau}_{\max}$ where 
\beq
	\label{tilde_tau}
		\tilde{\Tau}_{\max} := \min \left\{ T , \frac{ 1}{2} \left( \exp \left[ \left\lfloor \frac{R}{\Lambda} \right\rfloor \log (1+2\ep) \right] - 1 \right) \right\}.
\eeq
Observe that \eqref{radius_cR} gives that $R \geq \cR \geq \left( 1 + \frac{2}{\log(1+2\ep)} \right) \Lambda.$
Therefore $\left( \frac{R}{\Lambda} - 1 \right) \log (1+2\ep) \geq 2$ and thus 
\beq
	\label{use1}
		\exp \left[ \left\lfloor \frac{R}{\Lambda} \right\rfloor \log (1+2\ep) \right] - 1 \geq \exp \left[ \left( \frac{R}{\Lambda} - 1 \right) \log (1+2\ep) \right] - 1 \geq \exp \left[ \frac{1}{2} \left( \frac{R}{\Lambda} - 1 \right) \log (1+2\ep) \right]
\eeq
since $e^x -1 \geq e^{\frac{x}{2}}$ for $x \geq 2.$

For $x , y > 0$ we have $\frac{1}{x}e^y \geq e^{\frac{y}{2}}$ provided $ y \geq 2 \log (x).$
Observe that $ R \geq \cR \geq   4\Lambda \frac{ \log ( 2 \sqrt{1+2\ep})}{\log (1+2\ep)}$ from \eqref{radius_cR}, 
and so $\frac{R}{2\Lambda} \log (1+2\ep) \geq 2 \log ( 2 \sqrt{1+2\ep}).$ 
Thus, using the above inequality with $x := 2\sqrt{1+2\ep}$ and $y := \frac{R}{2\Lambda} \log (1+2\ep),$ we deduce that
\beq
	\label{use2}
		\frac{1}{2\sqrt{1+2\ep}}\exp \left[ \frac{R}{2\Lambda} \log (1+2\ep) \right] \geq \exp \left[ \frac{R}{4\Lambda} \log(1+2\ep) \right] = e^{cR},
\eeq
recalling the definition of $c > 0$ in \eqref{def_c}.
Finally we can compute that
\begin{align*}
\tilde{\Tau}_{\max} &\stackrel{\eqref{tilde_tau}}{=} \min \left\{ T , \frac{ \exp \left[ \left\lfloor \frac{R}{\Lambda} \right\rfloor \log (1+2\ep) \right] - 1}{2} \right\} 
			\stackrel{\eqref{use1}}{\geq} \min \left\{ T , \frac{1}{2}\exp \left[ \frac{1}{2} \left( \frac{R}{\Lambda} -1 \right) \log (1+2\ep) \right] \right\} = \ldots \\
		&\quad \ldots = \min \left\{ T , \frac{1}{2\sqrt{1+2\ep}}\exp \left[  \frac{R}{2\Lambda} \log (1+2\ep) \right] \right\} \stackrel{\eqref{use2}}{\geq} \min \left\{ T , e^{cR} \right\} =: \Tau_{\max}
\end{align*}
as claimed in \eqref{conc_of_main_thm_2} in Theorem \ref{mainresult}.
\end{proof}

\begin{proof}[Proof of Theorem \ref{hypat0}]
Retrieve the universal constant $\ep > 0$ arising in Theorem \ref{over_arch_thm}. Let $\al \in (0,1]$ be given and take $\de := \frac{\ep}{2} \in (0, \ep).$
For this choice of $\de$ we can retrieve constants $\Lambda = \Lambda (\al ) > 0$ and $ b = b ( \al ) > 0$ from Theorem \ref{over_arch_thm}.
Using these constants, we can define $c > 0$ and $\cR > 0$ exactly as they are defined in \eqref{def_c} and \eqref{radius_cR} respectively,
now both depending only on $\al$ as required.
Repeat the proof of Theorem \ref{mainresult}, observing that, in the notation of Theorem \ref{over_arch_thm}, we now assume that $g(0) \equiv \cH$ throughout $\m,$
and so we may now use the version of Theorem \ref{over_arch_thm} that avoids any time delay before achieving the desired Gauss curvature control.
Proceeding verbatim as in the proof of Theorem \ref{mainresult} above establishes that we have the Gauss curvature estimates claimed in \eqref{conc_time_0_equiv_thm} at $x \in \m$
for the time required in \eqref{conc_time_0_equiv_thm} in Theorem \ref{hypat0}.
\end{proof}

\noindent
{\sc Mathematical Institute,
University of Oxford,
Andrew Wiles Building
Radcliffe Observatory Quarter (550),
Woodstock Road,
Oxford,
OX2 6GG }

\noindent
{\bf{AM:}} \emph{andrew.mcleod@maths.ox.ac.uk} 

\noindent
\url{https://www.maths.ox.ac.uk/user}

\end{document}